\theoremstyle{plain} %% This is the default, anyway
\newtheorem{theorem}[equation]{Theorem}
\newtheorem{lemma}[equation]{Lemma}
\newtheorem{proposition}[equation]{Proposition}
\theoremstyle{definition}
\newtheorem{definition}[equation]{Definition}
\theoremstyle{remark}
\newtheorem{remark}[equation]{Remark}
\newtheorem{example}[equation]{Example}
\title{Heat semigroups on quantum automorphism groups of finite dimensional C*-algebras}
\author{Futaba Sato}
\address{Futaba Sato\\
Graduate School of Mathematical Sciences\\
The University of Tokyo\\
3-8-1 Komaba, Meguro-ku\\
Tokyo 153-8914, Japan}
\email{sato-futaba@g.ecc.u-tokyo.ac.jp}
\begin{document}
\maketitle

\begin{abstract}
In this paper, we investigate heat semigroups on a quantum automorphism group $\mathrm{Aut}^+(B)$ of a finite dimensional C*-algebra $B$ and its Plancherel trace. We show ultracontractivity, hypercontractivity, and the spectral gap inequality of the heat semigroups on ${\rm Aut}^+(B)$. Furthermore, we obtain the sharpness of the Sobolev embedding property and the Hausdorff-Young inequality of ${\rm Aut}^+(B)$.
\end{abstract}

\section{Introduction}
In this paper, we investigate heat semigroups on a class of compact quantum groups called quantum automorphism groups of finite dimensional C*-algebras, denoted by ${\rm Aut}^+(B,\psi)$ defined for a pair of a finite dimensional C*-algebra $B$ and a faithful state $\psi$ on $B$ introduced by Wang in 1998 \cite{Wan98}. 
This class contains quantum permutation groups $S_n^+$ and ``projective" versions of quantum orthogonal groups.

In the framework of compact quantum groups, the concept of Levy processes have a generalization in terms of semigroups of linear maps, and an analogue of Markov semigroups given by completely positive maps are particularly useful. 
According to \cite{CFK}, as compact quantum groups do not have a differential structure to define the Laplace-Beltrami operators, we instead consider L\'evy processes on a compact quantum group $\mathbb{G}$ which are invariant under the adjoint action by itself. 
This is because conjugate-invariant processes on classical compact groups have a generator given as a combination of the Laplace-Beltrami operator and the L\'evy measure \cite{Lia04}.

The generating functionals of adjoint invariant L\'evy processes on a compact quantum group $\mathbb{G}$ can be characterized as elements belonging to the center of the algebra of linear functionals on $\mathbb{G}$. 
Namely, for well-behaved compact quantum groups called Kac type, those states are known to have a one-to-one correspondence with linear functionals on the C*-algebra generated by the characters of finite dimensional irreducible unitary representations of $\mathbb{G}$ inside the universal C*-algebraic model of $\mathbb{G}$.
This leads to a concrete formula (Theorem \ref{heat semigroup}) of heat semigroups $(T_t)_t$ on ${\rm Aut}^+(B)$ with $\dim B\geq 5$ \cite{BGJ2} which implies:
\[T_t(u_{ij}^{(k)})=e^{-c_kt}u_{ij}^{(k)} \;\;{\rm where}\;c_k\sim k.\]

Up to the monoidal equivalence between ${\rm Aut}^+(B)$ and $S_n^+$ under $n=\dim B$, the heat semigroups on these quantum groups take the same form. 

Let us summarize our results.
We have ultracontractivity, hypercontractivity, the log-Sobolev inequality, and spectral gap inequality of $(T_t)_t$ with the generator $T_L$.
\begin{itemize}
\item $T_t$ is ultracontractive: $||T_tx||_\infty\leq\sqrt{f(t)}||x||_2$ where $x$ is an element of an eigenspace $V_s$ of $T_t$, $\alpha,\;\beta,\;\gamma$ are constants that only depend on $s$, and
\[f(t)=\frac{\beta^2e^{-2\alpha t}(1+e^{-2\alpha t})+2\beta\gamma e^{-2\alpha t}(1-e^{-2\alpha t})+\gamma^2(1-e^{-2\alpha t})^2}{(1-e^{-2\alpha t})^3}.\]
\item $T_t$ is hypercontractive: for each $p$ with $2<p<\infty$, there exists $\tau_p>0$ such that $||T_tx||_p\leq||x||_2$ for any $t\geq\tau_p$.
Namely the time $\tau_p$ can be estimated as 
 \[\tau_p=-\frac{n}{2}\log{Y}\]
 where $Y$ is the smallest real positive root of $\frac{Y^3-2Y^2+9Y}{(1-Y)^3}=\frac{1}{(p-1)D^2}$.
 \item$T_t$ satisfies the log-Sobolev inequality: there exists $t_0>0$ such that the following inequality holds
\[||T_t:L^2({\rm Aut}^+(B))\to L^{q(t)}({\rm Aut}^+(B))||\leq 1,\;\;\;0\leq t \leq t_0\]
where $q(t)=\frac{4}{2-t/t_0}$.

Furthermore, for $x\in L^{\infty}({\rm Aut}^+(B))_+\cap D(T_L)$, we have
\[h(x^2\log{x})-||x||_2^2\log{||x||_2}\leq-\frac{c}{2}h(xT_Lx)\]
where $c=\frac{t_0}{2}$ and $h$ is the Haar state of $\mathrm{Aut}^+(B)$.

 \item $T_t$ satisfies the spectral gap inequality: 
 \[\frac{1}{n}||x-h(x)||_2^2\leq-h(xT_Lx).\]
\end{itemize}
These follow from the same arguments as in \cite{FHLUZ}.

Moreover, by exploiting the concrete formula of heat semigroups on ${\rm Aut}^+(B)$, we show the sharpness of the Hardy-Littlewood-Sobolev inequality and the Hausdorff-Young inequality for $B$ with $\dim B\geq 5$, which are already known for $S_n^+$ in \cite{You20}.
\begin{itemize}
\item For any $p\in (1, 2]$, we have the Hardy-Littlewood-Sobolev inequality 
 \[\left(\sum_{k\geq0}\frac{n_k}{(1+k)^{s(\frac{2}{p}-1)}}||\widehat{f}(k)||_{HS}^2\right)^{\frac{1}{2}}\lesssim||f||_p\]
if and only if $s\geq 3$.
\item For any $p\in (1, 2]$, we have the Hausdorff-Young inequality 
\[\left(\sum_{k\geq0}\frac{1}{(1+k)^s}\left(n_k||\widehat{f}(k)||^2_{HS}\right)^{\frac{p'}{2}}\right)^{\frac{1}{p'}}\lesssim ||f||_p\]
for any $f\in L^p({\rm Aut}^+(B))$ if and only if $s\geq p'-2$.
\end{itemize}

In the appendix, we give another proof for the formula of heat semigroups on ${\rm Aut}^+(B)$. We thank Makoto Yamashita for conveying the idea of the proof. We show that the universal C*-algebra generated by the characters of finite dimensional irreducible unitary representations of ${\rm Aut}^+(B)$ coincides with $C([0, 1])$ by the monoidal equivalence between the representation categories of ${\rm Aut}^+(B)$ and $S_n^+$. We use the general correspondence between the central linear functions on compact quantum group $\mathbb{G}$ and the ``spherical" linear functionals on  Drinfeld double defined from $\mathbb{G}$ shown in \cite{DCFY}. 
By considering the isomorphism between the Drinfeld double of $\mathbb{G}$ and the generalized tube algebra ${\rm Tub}(\mathbb{G})$ for a compact quantum group $\mathbb{G}$ together with the strong Morita equivalence between ${\rm Tub}(\mathbb{G})$ and ${\rm Tub}({\rm Rep}(\mathbb{G}))$ \cite{NY18}, we have the correspondence of central states on ${\rm Aut}^+(B)$ and those of $S_n^+$.

\subsection*{Outline of the paper}

In Section2, we review the basics of compact quantum groups and introduce quantum automorphism groups of finite dimensional C*-algebras. 
Namely the concrete formula of heat semigroups on ${\rm Aut}^+(B)$ is included. 
In Section 3, we prove ultracontractivity, hypercontractivity, the log-Sobolev inequality, and the spectral gap inequality for the heat semigroups of ${\rm Aut}^+(B)$. 
Section 4 is devoted to the sharpness of the Sobolev embedding property of ${\rm Aut}^+(B)$. 
In the appendix, we have another proof of the concrete formula of heat semigroups on ${\rm Aut}^+(B)$.

In this paper, we write $\otimes$ for the minimal tensor product of C*-algebras. We write $\iota$ for an identity map.

\subsection*{Acknowledgements}
 
This work was supported by Japan Science and Technology Agency (JST) as part of Adopting Sustainable Partnerships for Innovative Research Ecosystem (ASPIRE), Grant Number JPMJAP2318, and Forefront Physics and Mathematics Program to Drive Transformation (FoPM), a World-leading Innovative Graduate Study (WINGS) Program, the University of Tokyo.
This is a part of the author's master's thesis, written under the supervision of Yasuyuki Kawahigashi at the University of Tokyo. The author would like to thank Professor Kawahigashi for his helpful comments. 
The author is greatly indebted to Makoto Yamashita for his useful comments in preparation of this paper and allowing the author to include the another proof for the formula of heat semigroups in the appendix.

\section{Preliminaries}
\subsection{Definition of compact quantum groups}
 For the basic theory of compact quantum groups, we follow the book \cite{NT13}.

\begin{definition}
A compact quantum group $\mathbb{G}$ consists of a unital Hopf $*$-algebra $\mathcal{O}({\mathbb{G}})$ with a coproduct $\Delta: \mathcal{O}({\mathbb{G}})\to\mathcal{O}({\mathbb{G}})\otimes\mathcal{O}({\mathbb{G}})$ together with a linear functional $h:\mathcal{O}({\mathbb{G}})\to \mathbb{C}$ called a Haar state satisfying the following conditions:
\begin{itemize}
\item $h$ is invariant in the sense that $(\iota\otimes h)\Delta(x)=h(x)1=(h\otimes\iota)\Delta(x)$ for all $x\in\mathcal{O}(\mathbb{G})$;
\item $h$ is normalized in the sense that $h(1)=1$;
\item For any $x\in \mathcal{O}(\mathbb{G}), \; h(x^*x)\geq0$.
\end{itemize}
\end{definition}

We can consider C*-completions of $\mathcal{O}(\mathbb{G})$ such as the reduced C*-algebra $C_r(\mathbb{G})$, the C*-algebra completion with respect to the GNS representation induced by the Haar state $h$.
We say that $\mathbb{G}$ is of Kac type if the Haar state is a trace.

A unitary representation of $\mathbb{G}$ on a finite dimensional Hilbert space $H$ is a unitary corepresentation of $\mathcal{O}(\mathbb{G})$ on $H$, given by a linear map $\delta: H\to H\otimes \mathcal{O}(\mathbb{G})$ satisfying $(\delta\otimes\iota)\delta=(\iota\otimes\Delta)\delta$, $(\iota\otimes\varepsilon)\delta=\iota$, and the unitary condition $(v_0, w_0)w_1^*v_1=(v, w) 1$ under the Sweedler's notation $\delta(v)=v_0\otimes v_1$ for $v, w\in H$.

A coaction of $\mathbb{G}$ on a $*$-algebra $B$ is a $*$-homomorphism $\rho:B\to B\otimes \mathcal{O}(\mathbb{G})$ such that $(\rho\otimes \iota)\rho=(\iota\otimes\Delta)\rho$ and $(\iota\otimes\varepsilon)\rho=\iota$.

\subsection{Quantum automorphism groups of finite dimensional C*-algebras}
Quantum automorphism groups of finite dimensional C*-algebras are introduced by Wang \cite[Definition 2.3]{Wan98}.

For quantum permutation groups, heat semigroups on those have ultracontractivity and hypercontractivity \cite{FHLUZ} and the sharp Sobolev embedding property follows from the concrete formula of heat semigroups \cite{You20}. 
In this paper, we show the ultracontractivity and hypercontractivity of heat semigroups and the sharpness of the Sobolev embedding property for the quantum groups ${\rm Aut}^+(B,\psi)$ for the canonical trace $\psi$ (Plancherel trace) on $B$, generalizing the case of $S_n^+$. 

For a finite dimensional C*-algebra $B$, we define the quantum automorphism group ${\rm Aut}^+(B)$ as a Hopf $*$-algebra generated by the matrix coefficients of the coaction of ${\rm Aut}^+(B)$ on $B$ that preserves a nice state called the Plancherel trace. 
If we take $B=\mathbb{C}^n$, then we have ${\rm Aut}^+(B)=S_n^+$, and if we take $B=M_n(\mathbb{C})$, we have the ``projective version" of quantum orthogonal groups for ${\rm Aut}^+(B)$.
It is known that ${\rm Aut}^+(B)$ has the rapid decay property \cite{Bra13} and the same fusion rule as $SO(3)$ \cite{Ban99}.

Consider a finite dimensional C*-algebra $B$ and a faithful state $\psi$ on $B$.
A coaction $\rho$ of a compact quantum group $\mathbb{G}$ on $B$ as above is said to be $\psi$-invariant if it satisfies $(\psi\otimes\iota)\rho(x)=\psi(x)1$ for any $x\in B$.

\begin{definition}
Let $B$ be a finite dimensional C*-algebra and $\psi$ be a faithful state on $B$.
The quantum automorphism group ${\rm Aut}^+(B, \psi)$ is a compact quantum group which is universal in the following sense: 
If there is another compact quantum group $\mathbb{G}$ which has a $\psi$-invariant coaction $\rho'$ on $B$, there is a unique $*$-homomorphism $\pi:\mathcal{O}({\rm Aut}^+(B, \psi))\to\mathcal{O}(\mathbb{G})$ such that $\rho'=(\iota\otimes\pi)\rho$.
\end{definition}

The concrete construction of ${\rm Aut}^+(B, \psi)$ in terms of generators and relations is given in \cite{Wan98}: The Hopf $*$-algebra $\mathcal{O}({\rm Aut}^+(B, \psi))$ is a universal $*$-algebra generated by the matrix coefficients $\{(\omega\otimes\iota)\rho(x):\omega\in B^*, x\in B\}$ of $\rho$, and the relations are determined by the condition that $\rho$ is a $\psi$-invariant coaction of ${\rm Aut}^+(B, \psi)$ on $B$. The Hopf $*$-algebra structure of $\mathcal{O}({\rm Aut}^+(B, \psi))$ is uniquely determined by the condition that $\rho$ is a $\psi$-invariant coaction. 

The coaction $\rho$ also defines a representation of ${\rm Aut}^+(B, \psi)$ on $L^2(B, \psi)$ \cite[Theorem 1.1]{Ban99} and it is called the fundamental representation .

A quantum automorphism group ${\rm Aut}^+(B, \psi)$ is a universal quantum analogue of the compact group ${\rm Aut}(B)$ of $*$-automorphisms on $B$. An automorphism $\alpha\in{\rm Aut}(B)$ is said to be $\psi$-preserving if $\psi\circ\alpha=\psi$. If we write ${\rm Aut}(B, \psi)$ for the subgroup of ${\rm Aut}(B)$ consisting of $\psi$-preserving automorphisms, then the algebra of coordinate functions $\mathcal{O}({\rm Aut}(B, \psi))$ is the abelianization of $\mathcal{O}({\rm Aut}^+(B, \psi))$.

Let $B$ be a finite dimensional C*-algebra equipped with a state $\psi$ on $B$ and $\delta>0$. We say $\psi$ is a $\delta$-form if the adjoint $m^*$ of the multiplication of $m:B\otimes B\to B$ with respect to the hermitian inner product defined by $\psi$ satisfies $m\circ m^*=\delta {\rm id}$. 

By \cite[Theorem 4.1]{Ban99}, we have the following fusion rules of ${\rm Aut}^+(B, \psi)$.

\begin{theorem}
The set of classes of finite dimensional irreducible representations of ${\rm Aut}^+(B, \psi)$ with a $\delta$-form $\psi$ can be labeled by the positive integers, ${\rm Irr}({\rm Aut}^+(B, \psi))=\{U_k:k\in\mathbb{N}\}$. The fundamental representation $U$ sarisfies $U\cong 1\oplus U_1$ and the fusion rule is the same as that of $SO(3)$, i.e.,
\[U_k\otimes U_s=U_{|k-s|}\oplus U_{|k-s|+1}\oplus\cdots\oplus U_{k+s-1}\oplus U_{k+s}.\]
\end{theorem}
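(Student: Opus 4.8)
The plan is to invoke Woronowicz's Tannaka--Krein duality: exhibit a generating family of intertwiners of the fundamental representation $U$ on $H=L^2(B,\psi)$ coming from the algebra structure of $B$, argue via the universal property of ${\rm Aut}^+(B,\psi)$ that these generate the entire representation category, and then read off the fusion rules from a dimension count on ${\rm Hom}$-spaces. First I would note that, since $\rho\colon B\to B\otimes\mathcal O({\rm Aut}^+(B,\psi))$ is a $\psi$-invariant coaction, hence a unital $*$-homomorphism, the multiplication $m\colon B\otimes B\to B$ is a morphism $U\otimes U\to U$, the unit $\eta\colon\mathbb C\to B$, $1\mapsto 1_B$, is a morphism $1\to U$, and $\psi\colon B\to\mathbb C$ is a morphism $U\to 1$; their adjoints with respect to the $\psi$-inner product ($\eta^*=\psi$, and $m^*$) are morphisms as well. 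The universal property of ${\rm Aut}^+(B,\psi)$, translated through Tannaka--Krein, says precisely that the rigid monoidal $*$-category generated by $U$ together with $m,\eta$ (and their adjoints and the resulting conjugation maps) is all of ${\rm Rep}({\rm Aut}^+(B,\psi))$: every ${\rm Hom}(U^{\otimes k},U^{\otimes\ell})$ is spanned by composites and tensor products of these maps.

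Next I would feed in the $\delta$-form hypothesis $mm^*=\delta\,{\rm id}_B$. Together with associativity and unitality of $m$ and the Frobenius compatibility between $m$ and $m^*$ --- all automatic in $B$ --- this makes $(B,m,\eta)$ a special symmetric Frobenius algebra object in the category, so the planar diagrammatics generated by $m,m^*,\eta,\eta^*$ are those of a Temperley--Lieb--Jones type calculus with loop value determined by $\delta$. Two things fall out at once: the orthogonal projection $p=\eta\eta^*\colon H\to H$ onto $\mathbb C1_B$ is an intertwiner, so $U\cong 1\oplus U_1$ with $U_1$ the orthogonal complement of $\mathbb C1_B$; and since only non-crossing diagrams survive, $\dim{\rm Hom}(U^{\otimes k},U^{\otimes\ell})$ is bounded above by the number of non-crossing pairings, hence by Catalan numbers.

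The main work is to show these bounds are sharp, i.e.\ that the non-crossing diagrams are linearly independent --- equivalently, that the Markov trace on the Temperley--Lieb algebra at parameter $\delta$ is non-degenerate. This holds provided $\delta$ is not too small; in our setting $\psi$ is the Plancherel trace, $\delta=\dim B$, and the standing hypothesis $\dim B\geq 5$ (already $\delta\geq 4$ would do) supplies this. Granting it, $\dim{\rm Hom}(1,U^{\otimes k})$ equals the Catalan number $C_k$, and the fusion rules follow by the standard induction for $SO(3)$-type categories: from $\dim{\rm Hom}(U_1\otimes U_1,1)=\dim{\rm Hom}(U_1\otimes U_1,U_1)=1$ one gets $U_1\otimes U_1\cong 1\oplus U_1\oplus U_2$ with $U_2$ a new irreducible, and inductively, using Frobenius reciprocity $\dim{\rm Hom}(U_k\otimes U_1,U_j)=\dim{\rm Hom}(U_k,U_j\otimes U_1)$ and the Catalan count, one obtains $U_1\otimes U_k\cong U_{k-1}\oplus U_k\oplus U_{k+1}$, that the $U_k$ ($k\geq 0$, $U_0=1$) are pairwise inequivalent, and that they exhaust ${\rm Irr}({\rm Aut}^+(B,\psi))$. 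Associativity of $\otimes$ then upgrades this to $U_k\otimes U_s\cong\bigoplus_{r=|k-s|}^{k+s}U_r$.

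The hard part is the linear independence of the planar diagrams: spanning and the relations are formal, but independence is a genuine positivity input, and it is exactly here that a lower bound on $\delta$ is unavoidable --- for $\delta<4$ one lands instead in $SU_q(2)$-type quotients and the fusion rules change. A shortcut that avoids re-deriving this is to observe that the generating morphisms and all their relations depend on $\psi$ only through the scalar $\delta$, so ${\rm Rep}({\rm Aut}^+(B,\psi))$ is monoidally equivalent to ${\rm Rep}({\rm Aut}^+(B',\psi'))$ whenever the two $\delta$-forms share the same $\delta$; choosing $B'=\mathbb C^n$ with the uniform trace --- for which ${\rm Aut}^+(B',\psi')=S_n^+$ and $\delta=n$ --- reduces the statement to the known fusion rules of $S_n^+$.
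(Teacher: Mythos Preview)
The paper does not prove this theorem; it simply cites \cite[Theorem 4.1]{Ban99}. Your proposal is essentially an outline of Banica's original argument --- Tannaka--Krein duality, the Frobenius structure coming from $(m,\eta)$, a diagrammatic spanning set for the ${\rm Hom}$-spaces, and the linear-independence / positivity step that fixes the dimension count --- so at the level of strategy you are reproducing the cited proof rather than offering something new.

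That said, several details in your outline are off and would need correction before this is a proof. First, for the Plancherel trace one has $\delta=\sqrt{\dim B}$, not $\delta=\dim B$ (the paper states this explicitly in the example following the theorem); the categorical loop value --- the dimension of $U$ --- is $\dim B=\delta^2$. Second, the planar calculus generated by $m,m^*,\eta,\eta^*$ is the calculus of \emph{non-crossing partitions}, not Temperley--Lieb pairings: the trivalent vertex $m$ and univalent vertex $\eta$ give blocks of all sizes, not just pairs. Your dimension count $\dim{\rm Hom}(1,U^{\otimes k})=C_k$ happens to be correct (the number of non-crossing partitions of $k$ points is the Catalan number $C_k$), but the justification you give via ``non-crossing pairings'' is not. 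Third, the genericity threshold is $\dim B\geq 4$ (equivalently $\delta\geq 2$), not $\delta\geq 4$; below this one gets finite fusion categories of $SO(3)_q$ type, not $SU_q(2)$ quotients. Finally, the theorem is stated for an arbitrary $\delta$-form $\psi$, whereas you invoke ``$\psi$ is the Plancherel trace'' midway through; the monoidal-equivalence shortcut you sketch at the end (depending only on $\delta$, reduce to $S_n^+$) is the cleaner way to cover the general case, and indeed that monoidal equivalence is exactly the proposition the paper records immediately after this theorem.
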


\begin{example}
Let $B$ be a finite dimensional C*-algebra. Fix an isomorphism  $B\cong\bigoplus^m_{r=1} M_{n_r}$. We define the Plancherel trace $\psi$ of $B$ by
\[\psi(A)\coloneqq\sum^m_{r=1}\frac{n_r}{\dim(B)}{\rm Tr}_{n_r}(A_r)\]
where $A=\bigoplus^m_{r=1}A_r\in B,\; A_r\in M_{n_r}$.
This is the only tracial $\delta$-form on $B$ and we have $\delta=\sqrt{\dim B}$.
\end{example}

The Plancherel state $\psi$ is preserved by any action of ${\rm Aut}(B)$ on $B$ and we have ${\rm Aut}(B, \psi)={\rm Aut}(B)$. Therefore ${\rm Aut}^+(B, \psi)$ can be regarded as the quantum analogue of ${\rm Aut}(B)$. 
We only consider $(B, \psi)$ with the Plancherel state $\psi$ in this paper. Hence we simply write ${\rm Aut}^+(B)={\rm Aut}^+(B, \psi)$. The compact quantum group ${\rm Aut}^+(B)$ is of Kac type.

\begin{example}
Let $n\geq2$. Let $C(S_n^+)$ be the universal unital C*-algebra generated by the $n^2$ self-adjoint elements $u_{ij}$, $1\leq i, j\leq n$ satisfying the following relations:
\[u_{ij}^2=u_{ij}=u_{ij}^*\]
\[\sum_k u_{ik}=1=\sum_k u_{kj}.\]
We define the comultiplication $\Delta$ by $\Delta(u_{ij})\coloneqq\sum_k u_{ik}\otimes u_{kj}$. Then we have a compact quantum group obtained from $C(S_n^+)$ and $\Delta$ called the quantum permutation group. 

If we in addition impose commutativity to the generators, we obtain the classical permutation group. 

Note that $S_n^+$ also arises as the quantum automorphism group ${\rm Aut}^+(\mathbb{C}^n)$. The matrix $U=(u_{ij})_{i, j}$ defines the fundamental representation of $S_n^+$.
\end{example}

For $B$ with $\dim B=1, 2, 3$, it is known that ${\rm Aut}^+(B)$ coincides with $S_n$ for $n=\dim B$. If $\dim B=4$, we have $SO(3)$ or $S_4^+$ for ${\rm Aut}^+(B)$ \cite[Section 3]{Wan98}. If $n\geq 4$, $S_n^+$ has infinite dimensional function algebra and does not coincide with $S_n$ \cite[Proposition 6.2]{Wan99}. 

From \cite[Proposition 19]{DCFY} and \cite[Theorem 4.2]{DVV}, we have the following monoidal equivalence between quantum automorphism groups and $S_n^+$.

\begin{proposition}
Let $B$ be $n$-dimensional C*-algebra.
Then ${\rm Aut}^+(B)$ and $S_n^+$ are monoidally equivalent.
\end{proposition}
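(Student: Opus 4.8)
The plan is to combine two ingredients. The first is a general principle: the monoidal equivalence class of a quantum automorphism group ${\rm Aut}^+(A,\varphi)$ attached to a $\delta$-form $\varphi$ depends only on the scalar $\delta$, not on the internal structure of the pair $(A,\varphi)$. The second is the computation recorded in the Example above defining the Plancherel trace: the Plancherel trace on an $n$-dimensional C*-algebra $B$ is a $\delta$-form with $\delta=\sqrt{n}$, a quantity depending on nothing but $n=\dim B$. Granting these, the conclusion is immediate: taking $A=\mathbb{C}^n$ with its Plancherel trace -- which is the uniform trace, and for which ${\rm Aut}^+(\mathbb{C}^n)=S_n^+$ -- both $(B,\psi)$ and $(\mathbb{C}^n,\psi)$ are $\delta$-forms with the common value $\delta=\sqrt n$, so ${\rm Aut}^+(B)$ and $S_n^+$ lie in the same monoidal equivalence class.

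To justify the first ingredient I would invoke Woronowicz's Tannaka--Krein duality. By construction ${\rm Rep}({\rm Aut}^+(B,\psi))$ is the rigid C*-tensor category generated by the fundamental representation on $H=L^2(B,\psi)$ together with the multiplication $m\colon H\otimes H\to H$, its adjoint $m^*$, the unit $\eta\colon\mathbb{C}\to H$, and $\eta^*$. The relations among these generating morphisms are the (special symmetric) Frobenius-algebra axioms and the $\delta$-form identity $m\circ m^*=\delta\,{\rm id}$, together with the normalization $\eta^*\eta=1$ -- all of which mention only the scalar $\delta$. Hence two $\delta$-forms with the same $\delta$ yield intertwiner spaces described by literally the same abstract presentation, so the two representation categories are equivalent as rigid C*-tensor categories, i.e., the quantum groups are monoidally equivalent. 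This is precisely the content of \cite[Proposition 19]{DCFY} and \cite[Theorem 4.2]{DVV} (the latter built on the linking-algebra and ergodic-action techniques of Bichon--De Rijdt--Vaes), which I would quote rather than reprove; one may equivalently identify the category with a Temperley--Lieb-type category at parameter $\delta$, whose idempotent completion has $\mathbb{N}$-indexed simple objects obeying the $SO(3)$ fusion rule of the earlier Theorem.

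The main -- and essentially the only -- obstacle is this first step: establishing that the intertwiner category is ``freely generated'' by a $\delta$-dimensional Frobenius algebra object and is therefore blind to the block decomposition of $B$; equivalently, producing the unitary fibre functor (or the ergodic action of full quantum multiplicity) that implements the equivalence. Once that is granted, matching the parameters is trivial, since the Example already records the value $\delta=\sqrt{n}$ for every $n$-dimensional $B$, and in particular for $\mathbb{C}^n$.
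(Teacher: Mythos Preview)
Your proposal is correct and follows exactly the route the paper takes: the paper does not give its own argument but simply cites \cite[Proposition 19]{DCFY} and \cite[Theorem 4.2]{DVV}, the very references you invoke, and your sketch (the representation category is determined by the Frobenius/$\delta$-form data, and the Plancherel trace gives $\delta=\sqrt{n}$ for any $n$-dimensional $B$) is precisely the content of those citations. You have supplied more explanatory detail than the paper itself, but the approach is identical.
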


\subsection{Heat semigroups on ${\rm Aut}^+(B)$}

We follow the general theory of L\'evy processes on compact quantum groups in \cite{Sch93} and \cite{CFK}.

The generator of a conjugate invariant L\'evy process on classical compact connected simple Lie groups can be written as the sum of the Laplace-Beltrami operator and the integration with respect to the L\'evy measure \cite[Proposition 4.4, Proposition 4.5]{Lia04}.
Those conjugate invariant L\'evy processes are generalized to compact quantum groups cases as adjoint invariant L\'evy processes.
For ${\rm Aut}^+(B)$, we have the L\'evy-Khinchine formula for adjoint-invariant L\'evy processes and by neglecting the term corresponding to the integration with respect to the L\'evy measure, we obtain the ``Brownian motions" and heat semigroups on ${\rm Aut}^+(B)$\cite[Theorem 3.14]{BGJ2} (cf. \cite[Section 6]{Cas21}).  Namely the results for $S_n^+$ can be found in \cite{FKS}.

\begin{theorem}\label{heat semigroup}
Let $B$ be a finite dimensional C*-algebra with $\dim B\geq 4$. Then the generating functional $L$ of symmetric central quantum Markov semigroups $(T_t)_{t\geq0}$ of ${\rm Aut}^+(B)$ is of the following formula:
\[L(u_{ij}^{(k)})=\frac{\delta_{ij}}{\Pi_k(n)}\left(-a\frac{\Pi_k'(n)}{2\sqrt{n}}+\int_0^n\frac{\Pi_k(x)-\Pi_k(n)}{n-x}d\nu(x)\right)\]
where $a>0$ is a real number, $\nu$ is a finite measure on $[0,n]$, and $\Pi_k\coloneqq S_{2k}(\sqrt{x})$ for the Chebyshev polynomials of the second kind $\{S_k\}_{k=0}^\infty$ defined by the recursion 
\[S_0(x)=1,\;S_1(x)=x,\;S_1S_k=S_{k-1}+S_{k+1}.\]
\end{theorem}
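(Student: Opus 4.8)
The plan is to transport the problem to the commutative algebra generated by the characters, where it becomes a one-variable L\'evy--Khinchine problem, and then to read off the Gaussian term and the jump term. First I would use the general description of symmetric adjoint-invariant quantum Markov semigroups: such a semigroup $(T_t)_{t\ge 0}$ is determined by its generating functional $L\colon\mathcal O(\mathrm{Aut}^+(B))\to\mathbb C$, which is hermitian, satisfies $L(1)=0$, is $\varepsilon$-conditionally positive ($L(x^*x)\ge 0$ whenever $\varepsilon(x)=0$), and is central for the convolution product. Centrality amounts to $(L\otimes\iota)\Delta=(\iota\otimes L)\Delta$; applying this to a matrix coefficient $u^{(k)}_{ij}$ shows that the matrix $\bigl(L(u^{(k)}_{ij})\bigr)_{i,j}$ intertwines the irreducible representation $U_k$ with itself, hence is scalar by Schur's lemma, so $L(u^{(k)}_{ij})=\frac{\delta_{ij}}{n_k}\ell_k$ with $n_k=\dim U_k=\Pi_k(n)$ and $\ell_k:=L(\chi_k)$. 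Using the orthogonality relations of matrix coefficients together with the fusion rules, one checks that $\varepsilon$-conditional positivity of $L$ is equivalent to the same property for the restriction $\ell$ of $L$ to the commutative unital $*$-subalgebra $\mathcal O_0$ generated by the characters. This reduction, which rests on the Kac property and adjoint invariance, is the first genuine ingredient, and I would cite it from the general theory of L\'evy processes on compact quantum groups.

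Next I would identify $\mathcal O_0$. By the $\mathrm{SO}(3)$-type fusion rule stated above, $\chi_1\chi_k=\chi_{k-1}+\chi_k+\chi_{k+1}$, so $\mathcal O_0=\mathbb C[\chi_1]$, and the assignment $\chi_k\mapsto\Pi_k=S_{2k}(\sqrt x)$ is a $*$-isomorphism of $\mathcal O_0$ onto the polynomial algebra $\mathbb C[x]$ in the self-adjoint variable $x\leftrightarrow\chi_U=1+\chi_1$ (the identity $S_2S_{2k}=S_{2k-2}+S_{2k}+S_{2k+2}$ matches the fusion rule). Under this isomorphism the counit $\varepsilon$ becomes evaluation at $x=n$, since $\varepsilon(\chi_k)=\dim U_k=\Pi_k(n)$; and since $0\le\chi_U\le n\cdot 1$ (seen directly, or via the monoidal equivalence with $S_n^+$), the C*-completion of $\mathcal O_0$ is $C(\Sigma)$ for some compact $\Sigma\subseteq[0,n]$ with $n\in\Sigma$. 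Thus $\ell$ is a hermitian, normalized, conditionally positive functional on $\mathbb C[x]$ whose conditional positivity is pointed at $x=n$. For such $\ell$, writing $p=\Pi_k-\Pi_k(n)$ (which vanishes at $n$) and separating off the part of $\ell$ carried at the single point $x=n$, one obtains a decomposition $\ell=\ell_{\mathrm{Gauss}}+\ell_{\mathrm{jump}}$ with $\ell_{\mathrm{jump}}(p)=\int_{[0,n)}\bigl(p(x)-p(n)\bigr)\,d\kappa(x)$ for a positive measure $\kappa$ on $[0,n)$; convergence on polynomials forces $\int(n-x)\,d\kappa(x)<\infty$, so $d\nu:=(n-x)\,d\kappa$ is a finite measure on $[0,n]$ and $\ell_{\mathrm{jump}}(\Pi_k)=\int_0^n\frac{\Pi_k(x)-\Pi_k(n)}{n-x}\,d\nu(x)$, which is the jump term.

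It remains to pin down $\ell_{\mathrm{Gauss}}$, and this is where I expect the main obstacle to lie. Conditional positivity at $x=n$ a priori allows a two-parameter part supported there, proportional to $\Pi_k'(n)$ and to $\Pi_k''(n)$, and the ``curvature'' direction $\Pi_k''(n)$ is not of the jump form and must be excluded. The way to do this is to show that the space of central symmetric Gaussian (second-order) generating functionals of $\mathrm{Aut}^+(B)$ is one-dimensional, equivalently that up to a nonnegative scalar there is a single adjoint-invariant Brownian motion, which I would deduce from the corresponding cocycle computation for $S_n^+$ via the monoidal equivalence. The generator of this one-dimensional family evaluates on $\chi_k$ to a fixed multiple of $\Pi_k'(n)$; normalizing it as for the Laplace--Beltrami operator (for instance by comparison with $\mathrm{SO}(3)$, where $n=4$ and the term reduces to $-a\,k(k+1)/24$) gives the coefficient $-\frac{a}{2\sqrt n}\Pi_k'(n)$. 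Combining this with the jump term and with $L(u^{(k)}_{ij})=\frac{\delta_{ij}}{\Pi_k(n)}\ell_k$ from the first step yields the stated formula. Besides this Gaussian computation, the only other non-routine point is the first reduction, which I would take from the literature.
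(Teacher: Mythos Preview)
Your approach differs substantially from what the paper does. In the main text the theorem is simply cited from the literature; the paper's own argument is the appendix, whose entire content is to establish $C^*(\chi_k:k\in\mathbb N)\cong C([0,n])$ by categorical means---via the identification of $\mathcal O_c(\widehat{\mathcal D}(\mathbb G))$ with the tube algebra ${\rm Tub}(\mathbb G)$, the strong Morita equivalence of ${\rm Tub}(\mathbb G)$ with ${\rm Tub}({\rm Rep}(\mathbb G))$, and the monoidal equivalence ${\rm Rep}({\rm Aut}^+(B))\simeq{\rm Rep}(S_n^+)$. This produces a bijection between central linear functionals on $\mathcal O({\rm Aut}^+(B))$ and on $\mathcal O(S_n^+)$, so the whole L\'evy--Khinchine classification is transported from the known $S_n^+$ case in one stroke rather than rederived.

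Your route is the direct analytic one, closer in spirit to the original reference: reduce to the character algebra via Schur, identify it with $\mathbb C[x]$ through the fusion rules, and run a one-variable L\'evy--Khinchine argument at $x=n$. This is viable, but there is an imprecision in your first reduction. The claim that $\varepsilon$-conditional positivity of a central $L$ on $\mathcal O({\rm Aut}^+(B))$ is \emph{equivalent} to that of its restriction $\ell$ to the character subalgebra is not correct as stated: the functional $\ell(p)=p''(n)$ is conditionally positive on $\mathbb C[x]$ (since $(\bar q q)''(n)=2|q'(n)|^2\ge 0$ whenever $q(n)=0$), yet it is absent from the formula---precisely the obstacle you flag later. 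The implication ``$\ell$ conditionally positive on $\mathbb C[x]\Rightarrow L$ conditionally positive on the full Hopf $*$-algebra'' fails, and that is why the second-derivative direction must be ruled out by a computation on $\mathcal O({\rm Aut}^+(B))$ itself (your proposed Gaussian-cocycle argument), not by any constraint visible on the commutative subalgebra alone. Your fix is legitimate and is essentially what the original proof does; what the paper's tube-algebra machinery buys is that this cocycle computation never has to be performed, since the full classification is inherited from $S_n^+$ at once.
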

The generator $T_L\coloneq(\iota\otimes L)\circ\Delta$ of the Markov semigroup $(T_t)_t$ is
\[T_L(u_{ij}^{(k)})=\frac{1}{\Pi_k(n)}\left(-a\frac{\Pi_k'(n)}{2\sqrt{n}}+\int_0^n\frac{\Pi_k(x)-\Pi_k(n)}{n-x}d\nu(x)\right)u_{ij}^{(k)}\]
and the Markov semigroup is given by $T_t={\rm exp}(tT_L)$.
In this paper, we consider the case $a=1$ and $\nu=0$, and the heat semigroups of ${\rm Aut}^+(B)$ are of the following form where $\dim B\geq 5$:
\[T_t(u_{ij}^{(k)})=e^{-c_kt}u_{ij}^{(k)} \;\;{\rm where}\;c_k\sim k.\]

\subsection{Noncommutative $L^p$ spaces}

We review the definitions of the Fourier transform on compact quantum groups \cite[Section 3]{MR3602819}. For the general theory of noncommutative $L^p$ theory, we refer to \cite{Pis03}, \cite{PX}, and for the Fourier transform on locally compact quantum groups, we follow \cite[Section 5]{MR3029493}, \cite[Section 3]{MR2669427} \cite[Section 3]{MR2747963}.

\begin{definition}
 Let $\mathbb{G}$ be a compact quantum group of Kac type and $h$ be its Haar state. 
 \begin{itemize}
 \item We define an associated von Neumann algebra $L^\infty(\mathbb{G})\coloneqq C_r(\mathbb{G})''$. We write $||\cdot||_\infty$ for the operator norm of $L^\infty(\mathbb{G})$.
 \item For any $p\in [1, \infty)$, we define the noncommutative $L^p$-space $L^p(\mathbb{G})$ as the completion of $\mathcal{O}(\mathbb{G})$ with respect to the norm $||a||_p\coloneqq (h((a^*a)^{\frac{p}{2}}))^{\frac{1}{p}}\;(a\in\mathcal{O}(\mathbb{G}))$.
 \item For any $a\in \mathcal{O}(\mathbb{G})$, we define $||a||_{HS}\coloneqq h(a^*a)^{\frac{1}{2}}$.
 \end{itemize}
 \end{definition}

\begin{definition}
Let $\mathbb{G}$ be a compact quantum group.
\begin{itemize}
 \item We define the non-commutative $\ell^{\infty}$-space by
 \[\ell^{\infty}(\widehat{\mathbb{G}})\coloneqq \oplus_{\alpha\in{\rm Irr}(\mathbb{G})} M_{n_{\alpha}}\]
 where $n_{\alpha}$ is the dimension of $U_{\alpha}$.
 \item For $1\leq p<\infty$, we define the non-commutative $\ell^p$-space by
 \[\ell^p(\mathbb{G})\coloneqq\left\{A\in \ell^{\infty}(\mathbb{G}):\sum_{\alpha\in{\rm Irr}(\mathbb{G})}n_{\alpha}{\rm tr}_{M_{n_{\alpha}}}(|A_{\alpha}|^p)<\infty\right\}.\]
 where ${\rm tr}_{M_{n_{\alpha}}}$ is the usual trace on $M_{n_{\alpha}}$.
 \end{itemize}
 \end{definition}
 
 \begin{definition}
Let $\mathbb{G}$ be a compact quantum group.
We define the Fourier transform $\mathcal{F}:L^1(\mathbb{G})\to l^{\infty}(\widehat{\mathbb{G}}),\;\phi\mapsto\widehat{\phi}=(\widehat{\phi}(\alpha))_{\alpha\in{\rm Irr}(\mathbb{G})}$ by
\[\widehat{\phi}(\alpha)_{ij}=\phi((u_{ij}^{\alpha})^*)\]
for all $1\leq i,j\leq n_\alpha$ under the identification $L^1(\mathbb{G})=L^{\infty}(\mathbb{G})_*$.
\end{definition}

If $\mathbb{G}$ is of Kac type, we call 
$\sum_{\alpha\in{\rm Irr}(\mathbb{G})} n_{\alpha}{\rm tr} (\widehat{\phi}(\alpha)u^{\alpha})=\sum_{\alpha\in{\rm Irr}(\mathbb{G})} \sum_{i, j=1}^{n_{\alpha}}n_{\alpha}\widehat{\phi}(\alpha)_{ij}u^{\alpha}_{ij}$
 the Fourier series of $\phi\in L^1(\mathbb{G})$ and denote it by 
 $\phi\sim \sum_{\alpha\in{\rm Irr}(\mathbb{G})} n_{\alpha}{\rm tr} (\widehat{\phi}(\alpha)u^{\alpha})$.
 If $f\in \mathcal{O}(\mathbb{G})$, we indeed have $f= \sum_{\alpha\in{\rm Irr}(\mathbb{G})} n_{\alpha}{\rm tr} (\widehat{f}(\alpha)u^{\alpha})$ because $\widehat{f}(\alpha)=0$ for all but finitely many $\alpha$. 

By the Plancherel theorem and the complex interpolation theorem, $\mathcal{F}$ can be regarded as a contractive map from $L^p(\mathbb{G})$ into $\ell^{p'}(\mathbb{G})$ for $1\leq p\leq2$ where $p'$ is the conjugate of $p$. 

Next we review some facts on complex interpolation methods. For details, we refer to \cite[Section 1]{Xu96}.
\begin{definition}
Let $\{E_k\}_{k\in \mathbb{Z}}$ be a family of Banach spaces and $\mu$ be a positive measure on $\mathbb{Z}$. We define vecor valued $\ell^p$-space by
\[\ell^p(\left \{E_k\right\}_{k\in\mathbb{Z}},\mu)=\left \{(x_k)_{k\in \mathbb{Z}}:x_k\in E_k~\text{for~all~}k\in \mathbb{Z}~\mathrm{and~}\left (||x_k||_{E_k}\right )_{k\in \mathbb{Z}} \in \ell^p(\mathbb{Z}, \mu)\right\}\]
where the norm structure is
\[||(x_k)_{k\in  \mathbb{Z}}||_{\ell^p(\left \{E_k\right\}_{k\in  \mathbb{Z}},\mu)}= \left \{ \begin{array}{cc}\left ( \sum_{k\in  \mathbb{Z}}||x_k||_{E_k}^p \mu(k) \right )^{\frac{1}{p}},&~\mathrm{if~}1\leq p<\infty,\\ \sup_{k\in  \mathbb{Z}}\left \{ ||x_k||_{E_k}\right \},&~\mathrm{if~}p=\infty. \end{array}\right.\]
\end{definition}

If $(E_k,F_k)$ is a compatible pair of Banach spaces for any $k\in \mathbb{Z}$ and $\mu_0,\mu_1$ are two positive measures on $\mathbb{Z}$, then for any $\theta\in (0, 1)$ we have
\[\left (\ell^{p_0}(\left \{E_k\right\}_{k\in \mathbb{Z}},\mu_0),\ell^{p_1}(\left \{F_k\right\}_{k\in \mathbb{Z}},\mu_1) \right )_{\theta}= \ell^{p}(\left \{(E_k,F_k)_{\theta}\right\}_{k\in \mathbb{Z}},\mu)\]
isometrically, where $\displaystyle \frac{1-\theta}{p_0}+\frac{\theta}{p_1}=\frac{1}{p}$ and $\mu=\displaystyle \mu_0^{\frac{p(1-\theta)}{p_0}}\mu_1^{\frac{p \theta}{p_1}}$.

\section{Ultracontracitivity and hypercontractivity of heat semigroups on ${\rm Aut}^+(B)$}
 
We obtain the ultracontractivity and hypercontractivity of the heat semigroup on ${\rm Aut}^+(B)$ as those of $S_n^+$ investigated in \cite{FHLUZ}. In the following, we write $\dim B=n$. We follow the same outline as in the case for $S_n^+$ \cite{FHLUZ}, and we sketch the necessary modifications.

First we recall by Theorem \ref{heat semigroup} that the eigenvalues $\lambda_k$ of the generator $T_L$ of a heat semigroup are given by 
\[\lambda_k=-\frac{\Pi_k'(n)}{2\sqrt{n}\Pi_k(n)}\]
with multiplicities $m_k=\Pi_k(n)^2$ \cite[Section 1.4]{FHLUZ} (cf. \cite[Remark 10.4]{CFK}). 

Consider the zeros of the Chevyshev polynomial of the second kind $S_k$:
\[S_k(x)=(x-x_1)\cdots(x-x_k).\]
Then we have the following for $n \geq 5$ as in \cite[Lemma 1.8]{FHLUZ}:
\[\frac{k}{n}\leq -\lambda_k=\frac{\Pi_k'(n)}{2\sqrt{n}\Pi_k(n)}=\frac{1}{2\sqrt{n}}\sum_{s=1}^n\frac{1}{\sqrt{n}-x_s}\leq \frac{k}{\sqrt{n}(\sqrt{n}-2)}.\]

\subsection{Ultracontractivity}

 \begin{definition}
 Let $\mathbb{G}$ be a compact quantum group of Kac type.
 A semigroup $\{T_t\}$ of $T_t:L^2(\mathbb{G})\to L^\infty(\mathbb{G})$ is said to have ultracontractivity if $T_t$ is bounded for any $t$.
 \end{definition}

The following theorem \cite[Theorem 2.1]{FHLUZ} gives a condition of a semigroup on a compact quantum group of Kac type to be ultracontractive as follows:

\begin{theorem}
Let $\{T_t\}$ be a heat semigroup on a compact quantum group $\mathbb{G}$ of Kac type and ${\rm Irr}(\mathbb{G})$ is labelled by integers $s\geq 0$, so that ${\rm Irr}(\mathbb{G})=\{U_s|s=0, 1, 2...\}$. Assume that $\{T_t\}$ satisfies the following conditions:
\begin{itemize}
\item The subspaces $V_s$ spanned by the matrix coefficients of $U_s\in{\rm Irr}(\mathbb{G})$ are eigenspaces for the generator $T_L$ of $\{T_t\}$, i.e., $T_L x=\lambda_s x$ for $x\in V_s$.
\item We have an estimate of the eigenvalues $\lambda_s$ of the form $\lambda_s\leq-\alpha s$ for some $\alpha>0$. 
\item We have an inequality of the form 
\[||x||_\infty\leq(\beta s+\gamma)||x||_2\]
for $x\in V_s$ where $\beta, \gamma\geq 0$ are independent of $s$.
\end{itemize}
Then $T_t$ is ultracontractive: $||T_tx||_\infty\leq\sqrt{f(t)}||x||_2$ where
\[f(t)=\frac{\beta^2e^{-2\alpha t}(1+e^{-2\alpha t})+2\beta\gamma e^{-2\alpha t}(1-e^{-2\alpha t})+\gamma^2(1-e^{-2\alpha t})^2}{(1-e^{-2\alpha t})^3}.\]

\end{theorem}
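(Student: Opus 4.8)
The plan is to run a standard spectral/heat-kernel estimate: express $T_t x$ in the eigenbasis, use the exponential decay of eigenvalues together with the given growth bound on the sup norm of matrix coefficients, and optimize over a free parameter. Concretely, for an arbitrary $x \in L^2(\mathbb{G})$ I would first decompose $x = \sum_{s \geq 0} x_s$ with $x_s \in V_s$ the orthogonal projection onto the $s$-th isotypic component, so that $T_t x = \sum_s e^{t\lambda_s} x_s$ and, by the second hypothesis, $\|T_t x\|_\infty \leq \sum_s e^{t\lambda_s}\|x_s\|_\infty \leq \sum_s e^{-\alpha s t}(\beta s + \gamma)\|x_s\|_2$. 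By Cauchy--Schwarz this is at most $\left(\sum_s e^{-2\alpha s t}(\beta s + \gamma)^2\right)^{1/2}\left(\sum_s \|x_s\|_2^2\right)^{1/2} = \left(\sum_s e^{-2\alpha s t}(\beta s+\gamma)^2\right)^{1/2}\|x\|_2$, using orthogonality of the $V_s$ in $L^2$.

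It then remains to show that $\sum_{s\geq 0} e^{-2\alpha s t}(\beta s + \gamma)^2 = f(t)$ with $f$ as in the statement. Writing $q = e^{-2\alpha t} \in (0,1)$ and expanding $(\beta s + \gamma)^2 = \beta^2 s^2 + 2\beta\gamma s + \gamma^2$, I would use the three elementary generating-function identities $\sum_{s\geq 0} q^s = \frac{1}{1-q}$, $\sum_{s\geq 0} s q^s = \frac{q}{(1-q)^2}$, and $\sum_{s\geq 0} s^2 q^s = \frac{q(1+q)}{(1-q)^3}$, then combine over the common denominator $(1-q)^3$. This yields numerator $\beta^2 q(1+q) + 2\beta\gamma q(1-q) + \gamma^2(1-q)^2$, which is exactly the numerator displayed with $q = e^{-2\alpha t}$; hence $\|T_t x\|_\infty \leq \sqrt{f(t)}\,\|x\|_2$, and boundedness for each fixed $t > 0$ follows since $f(t) < \infty$.

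There is essentially no serious obstacle here — the argument is a routine diagonalization plus Cauchy--Schwarz — but two small points deserve care. First, one should note that the series manipulations are justified because all terms are nonnegative (Tonelli), so the interchange of $\sum_s$ and the operator-norm estimate, and the convergence of the resulting series for $t > 0$, are automatic. Second, when $x$ itself lies in a single eigenspace $V_s$ (the case highlighted in the introduction), the same computation gives the sharper bound $\|T_t x\|_\infty \leq e^{-\alpha s t}(\beta s + \gamma)\|x\|_2$, but to get a uniform-in-$x$ ultracontractivity constant one genuinely needs to sum over all $s$, which is where $f(t)$ arises; I would state it in the general form. The only mildly delicate bookkeeping is verifying the identity $\sum s^2 q^s = q(1+q)/(1-q)^3$, which follows by differentiating $\sum s q^s = q/(1-q)^2$ and multiplying by $q$, or by applying $\theta\frac{d}{d\theta}$ twice to the geometric series.
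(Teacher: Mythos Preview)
Your proposal is correct and follows exactly the approach of the original source \cite[Theorem 2.1]{FHLUZ}, which the paper cites without reproducing a proof; the same decomposition-plus-Cauchy--Schwarz scheme is visible in the paper's proof of the hypercontractivity Theorem~\ref{hypercontractivity}, where one writes $x=\sum_k x_k$ with $x_k\in V_k$, bounds $\|T_t x_k\|$ by $e^{\lambda_k t}(\beta k+\gamma)\|x_k\|_2$, and then applies Cauchy--Schwarz to separate the $\ell^2$-norm of $(\|x_k\|_2)_k$ from the scalar series $\sum_k e^{2\lambda_k t}(\beta k+\gamma)^2$. Your generating-function evaluation of that series is the standard one and matches the displayed $f(t)$.
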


The following theorem \cite[Theorem 4.10]{Bra13} implies that the theorem above can be applied to a heat semigroup on ${\rm Aut}^+(B)$ with $\dim B\geq5$ by taking $\alpha=\frac{1}{n},\;\beta=2D,\;\gamma=D$ as the same for $S_n^+$.

\begin{theorem}
Let $B$ be a finite dimensional C*-algebra with $\dim B\geq5$. Then there exists a constant $D>0$ depending only on $\dim B$ such that 
\[||x||_{\infty}\leq D(2k+1)||x||_2\;\;(k\in \mathbb{N},\;x\in V_k)\]
where $V_k$ is the linear span of the matrix coefficients of $U_k\in{\rm Irr}({\rm Aut}^+(B))$.
\end{theorem}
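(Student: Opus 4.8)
The plan is to recognise the stated inequality as a quantitative form of the rapid decay property of $\widehat{{\rm Aut}^+(B)}$ for the length function $U_k\mapsto k$ attached to the $SO(3)$-type fusion rules, and to prove it by the standard "cancellation-level" decomposition. Since $C_r({\rm Aut}^+(B))$ acts faithfully on $L^2({\rm Aut}^+(B))$ by left multiplication and $||\cdot||_\infty$ is the induced operator norm, for $x\in\mathcal{O}({\rm Aut}^+(B))$ one has $||x||_\infty=\sup\{||xy||_2:y\in\mathcal{O}({\rm Aut}^+(B)),\ ||y||_2\le1\}$, so it is enough to prove
\[||xy||_2\le D(2k+1)\,||x||_2\,||y||_2\qquad(x\in V_k,\ y\in\mathcal{O}({\rm Aut}^+(B))).\]

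First I would write $y=\sum_l y_l$ with $y_l\in V_l$ (a finite, orthogonal sum). Since a product of a matrix coefficient of $U_k$ with one of $U_l$ is a matrix coefficient of $U_k\otimes U_l$, the fusion rules give $xy_l\in\bigoplus_{j=|k-l|}^{k+l}V_j$. Writing $P_j$ for the orthogonal projection of $L^2$ onto $V_j$ and reindexing the components of $xy_l$ by the cancellation level $m:=k+l-j$, which runs over $\{0,1,\dots,2\min(k,l)\}\subseteq\{0,1,\dots,2k\}$, I set $(xy)^{(m)}:=\sum_l P_{k+l-m}(xy_l)$, so that $xy=\sum_{m=0}^{2k}(xy)^{(m)}$ is a sum of only $2k+1$ terms. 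For fixed $m$ the summands $P_{k+l-m}(xy_l)$ lie in the pairwise orthogonal spaces $V_{k+l-m}$ as $l$ varies, hence $||(xy)^{(m)}||_2^2=\sum_l||P_{k+l-m}(xy_l)||_2^2$. By the triangle inequality everything then reduces to the \emph{uniform bilinear estimate}
\[||P_j(xy_l)||_2\le D\,||x||_2\,||y_l||_2\qquad(x\in V_k,\ y_l\in V_l,\ |k-l|\le j\le k+l),\]
with $D$ depending only on $n=\dim B$: this gives $||(xy)^{(m)}||_2^2\le D^2||x||_2^2\sum_l||y_l||_2^2=D^2||x||_2^2\,||y||_2^2$, and summing $2k+1$ such terms yields the claim.

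The main obstacle is this last estimate, and it is genuinely the hard part. Writing $x$ and $y_l$ via their coefficient matrices $X\in M_{n_k}$, $Y\in M_{n_l}$ and an isometric intertwiner $V_j\colon H_j\to H_k\otimes H_l$, the component $P_j(xy_l)$ has coefficient matrix $V_j^*(X\otimes Y)V_j$, and the only bound one gets for free — from $V_jV_j^*\le1$ and the Kac-type orthogonality relations — is $||P_j(xy_l)||_2\le\sqrt{n_kn_l/n_j}\,||x||_2\,||y_l||_2$. This is far too weak: since $n_k=\Pi_k(n)$ grows exponentially in $k$ (as $n\ge5$), the factor $n_kn_l/n_j$ is exponentially large in $\min(k,l)$ when $j$ is near $|k-l|$, and feeding it into the decomposition above would produce only an exponential, not polynomial, bound. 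Obtaining a constant here requires the fine structure of the spaces ${\rm Hom}(U_j,U_k\otimes U_l)$: via the monoidal equivalence between ${\rm Aut}^+(B)$ and $S_n^+$ the representation category is described by non-crossing partition (Temperley--Lieb) diagrams, in which these intertwiners are built from the duality/multiplication morphism $m^*$ of the fundamental representation (a $\delta$-form with $\delta=\sqrt n$), whose norms are controlled by $\delta$; estimating the norm of the associated projection $U_k\otimes U_l\to U_j$ then produces the required constant. This is the heart of the rapid decay property for free quantum groups, established by Vergnioux for the free orthogonal groups and carried out for ${\rm Aut}^+(B)$ in \cite{Bra13}.

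Alternatively, one can bypass repeating that analysis: the stated inequality is exactly property RD of $\widehat{{\rm Aut}^+(B)}$ for the canonical length function, and since ${\rm Aut}^+(B)$ and $S_n^+$ are monoidally equivalent and both of Kac type — so that the fusion rules, the dimensions $n_k$, and $||\cdot||_2$ on each $V_k$ all correspond — the estimate can be transported from $\widehat{S_n^+}$, with $D$ depending only on $n=\dim B$.
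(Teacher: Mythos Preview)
The paper does not prove this theorem at all: it is simply quoted from \cite[Theorem 4.10]{Bra13}. Your proposal therefore already goes further than the paper does --- you outline the standard rapid--decay argument (reduction of $\|\cdot\|_\infty$ to the left-multiplication operator norm on $L^2$, decomposition of $xy$ by cancellation level into $2k+1$ mutually orthogonal pieces, and reduction to the uniform bilinear estimate $\|P_j(xy_l)\|_2\le D\,\|x\|_2\,\|y_l\|_2$), correctly identify that estimate as the genuine content, and then, like the paper, defer it to \cite{Bra13}. The sketch is accurate and follows the architecture of Brannan's proof.

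One caution on your alternative route: monoidal equivalence does not identify the reduced C*-algebras $C_r({\rm Aut}^+(B))$ and $C_r(S_n^+)$, so you cannot literally transport the operator norm $\|\cdot\|_\infty$. What \emph{does} transfer is the bilinear estimate itself: writing $\|P_j(xy_l)\|_2$ via the orthogonality relations gives an inequality involving only the dimensions $n_k=\Pi_k(n)$ (which coincide, being the categorical dimensions in the Kac case) and the norm of the isometric intertwiner $H_j\hookrightarrow H_k\otimes H_l$, and these are purely categorical quantities preserved by the monoidal equivalence. Combined with the cancellation decomposition --- which is itself determined by the fusion rules --- this recovers RD for $\widehat{{\rm Aut}^+(B)}$ from RD for $\widehat{S_n^+}$. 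So the alternative is valid, but only \emph{after} the reduction you have already carried out; it is not a shortcut around it.
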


Let $\mathbb{G}$ be a compact matrix quantum group. Its discrete dual is said to have the rapid decay property with $r_k\lesssim (1+k)^\beta$ if there exists $C$ and $\beta$ such that 
\[||x||_{\infty}\leq C(1+k)^{\beta}||x||_2\]
for any $x\in V_k$. 
The theorem above shows that ${\rm Aut}^+(B)$ has the rapid decay property. 

\begin{remark}
For ${\rm Aut}^+(B)$ with $\dim B=4$, we just have $S_4^+$ or $SO(3)$, and ultracontracitivity for $S_4^+$ is obtained by concrete calculation in \cite[Section 2.2]{FHLUZ}. In this case, eigenvalues of heat semigroups satisfy
\[\lambda_k=-\frac{k(k+2)}{6}.\]
\end{remark}

\subsection{Hypercontractivity and the log-Sobolev inequality}

\begin{definition}
We say that a semigroup $\{T_t\}$ is hypercontractive if for each $p$ with $2<p<\infty$, there exists $\tau_p>0$ such that $||T_tx||_p\leq||x||_2$ for any $t\geq\tau_p$.
\end{definition}

Note that if a semigroup $\{T_t\}$ is hypercontractive, we also have $||T_tx||_p\leq||x||_2$ for $1\leq p \leq 2$.

We have the hypercontractivity of ${\rm Aut}^+(B)$ as that for $S_n^+$ proved in \cite[Theorem 2.4]{FHLUZ}. We also have the hypercontractivity estimate as in \cite{FHLUZ} because the proofs only use the form of the heat semigroups and do not depend on the other properties of $S_n^+$ (cf. \cite[Proposition 2.5, Theorem 2.6]{FHLUZ}).

\begin{theorem}\label{hypercontractivity}
Let $B$ be a finite dimensional C*-algebra.
Then the heat semigroup $\{T_t\}$ of ${\rm Aut}^+(B)$ is hypercontractive.

Furthermore, the hypercontractivity of $\{T_t\}$ is achieved at least from the time $\tau_p$ given by
 \[\tau_p=-\frac{n}{2}\log{Y}\]
 where $Y$ is the smallest real positive root of $\frac{Y^3-2Y^2+9Y}{(1-Y)^3}=\frac{1}{(p-1)D^2}$.
\end{theorem}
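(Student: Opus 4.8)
The plan is to check that ${\rm Aut}^+(B)$ has the three structural features of $S_n^+$ that drive the proof of hypercontractivity in \cite{FHLUZ}, and then to run that proof with $n=\dim B$ throughout. The features are: (i) $\{T_t\}$ acts on the isotypic block $V_k$ as multiplication by $e^{-c_k t}$, with the eigenvalue estimate $c_k=-\lambda_k\ge k/n$ obtained above from the zeros of the Chebyshev polynomials; (ii) the fusion rules of ${\rm Aut}^+(B)$ are those of $SO(3)$, so $V_k\cdot V_l\subseteq\bigoplus_{m=|k-l|}^{k+l}V_m$ and the relevant fusion-ring combinatorics are those of the classical dimensions $2k+1$; (iii) the rapid decay estimate $||x||_\infty\le D(2k+1)||x||_2$ on $V_k$ from \cite[Theorem 4.10]{Bra13}, with $D=D(\dim B)$. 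Since ${\rm Aut}^+(B)$ is of Kac type, the traciality and $L^p$-duality used in \cite{FHLUZ} are available. These are precisely the inputs of \cite[Theorem 2.4, Proposition 2.5, Theorem 2.6]{FHLUZ}, so their argument applies verbatim; it remains only to recall its shape and to extract $\tau_p$.

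Fix $p\in(2,\infty)$. First I would reduce the claim to the estimate $||T_t:L^2({\rm Aut}^+(B))\to L^p({\rm Aut}^+(B))||\le 1$ whenever $(p-1)D^2\sum_{k\ge1}(2k+1)^2 e^{-2kt/n}\le 1$. Following \cite[Proposition 2.5]{FHLUZ}, one expands $x=h(x)1+\sum_{k\ge1}x_k$, tracks the product structure of matrix coefficients through the identity $||T_tx||_p^2=||(T_tx)^*(T_tx)||_{p/2}$ and the $SO(3)$ fusion rules, applies the rapid decay bound to each block---so that each block carries a weight $(D(2k+1))^2$---, uses $c_k\ge k/n$ to replace each $e^{-2c_k t}$ by $Y^k$ with $Y:=e^{-2t/n}$, and compares the resulting scalar expression with a classical hypercontractive model sharing the same fusion data, whose $L^2\to L^p$ contraction threshold supplies the factor $p-1$. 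The elementary identity
\[\sum_{k\ge1}(2k+1)^2 Y^k=\frac{Y^3-2Y^2+9Y}{(1-Y)^3},\]
obtained by summing $\sum_{k\ge0}Y^k,\ \sum_{k\ge0}kY^k,\ \sum_{k\ge0}k^2Y^k$ and discarding the $k=0$ term, rewrites this condition as $(p-1)D^2\,(Y^3-2Y^2+9Y)/(1-Y)^3\le 1$.

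To pin down the time I would analyze $\varphi(Y):=(Y^3-2Y^2+9Y)/(1-Y)^3$. As a power series with positive coefficients it is continuous and strictly increasing on $[0,1)$, with $\varphi(0)=0$ and $\varphi(Y)\to+\infty$ as $Y\to 1^-$; moreover $\varphi(Y)=Y(Y^2-2Y+9)/(1-Y)^3\le 0$ for $Y<0$ or $Y>1$. Hence $\varphi(Y)=1/((p-1)D^2)$ has a unique positive real root $Y_p$, which lies in $(0,1)$ and is therefore the smallest positive real root. The sufficient condition becomes $Y=e^{-2t/n}\le Y_p$, that is $t\ge -\tfrac n2\log Y_p$; so $\tau_p:=-\tfrac n2\log Y_p$ works, which is the asserted formula. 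Since $p\in(2,\infty)$ was arbitrary, $\{T_t\}$ is hypercontractive.

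The genuinely hard step is the estimate of \cite[Proposition 2.5]{FHLUZ}---namely, obtaining the sharp condition with the factor $p-1$. A bare triangle-inequality argument, say $||T_tx-h(x)1||_p\le\sum_{k\ge1}e^{-c_k t}||x_k||_p$ followed by the interpolation $||x_k||_p\le(D(2k+1))^{1-2/p}||x_k||_2$ and then splitting off the mean, only produces a bound $C(t)||x||_2$ with $C(t)>1$: recombining the mean costs a constant, and the interpolation carries $(D(2k+1))^{2-4/p}$ rather than the correct $D^2(2k+1)^2$ and no $p-1$. Recovering the contraction $||T_t||_{2\to p}\le 1$ forces one to exploit the operator-algebraic structure (traciality and the $SO(3)$ fusion rules) to reduce to a classical hypercontractivity inequality; this reduction is the substance of \cite{FHLUZ} and is what I would import unchanged. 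Everything else---the eigenvalue bound $c_k\ge k/n$, the rapid decay input, the series summation, and the analysis of $\varphi$---is insensitive to the passage from $S_n^+$ to ${\rm Aut}^+(B)$: only the data $n=\dim B$ and the constant $D$ change. (One could alternatively deduce qualitative hypercontractivity from the log-Sobolev inequality via a noncommutative Gross-type theorem, but that route would not give the explicit $\tau_p$.)
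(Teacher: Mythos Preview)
Your endpoint is right—the sufficient condition $(p-1)D^2\sum_{k\ge1}(2k+1)^2 e^{-2kt/n}\le 1$, the series identity, and the analysis of $\varphi$ are all correct and match the paper. But you have misidentified the mechanism that produces that condition, and what you describe as the ``genuinely hard step'' is not actually present in the argument.

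The paper does \emph{not} use the product expansion $\|(T_tx)^*(T_tx)\|_{p/2}$, the $SO(3)$ fusion rules, or any comparison with a ``classical hypercontractive model''. The $(p-1)$ factor comes directly from the Ricard--Xu inequality \cite[Theorem 1]{RX}:
\[
\|x\|_p^2 \le |h(x)|^2 + (p-1)\,\|x-h(x)1\|_p^2 \qquad (2<p<\infty),
\]
valid for any tracial probability space. After this, the paper runs exactly the ``bare triangle-inequality argument'' you dismissed: apply $T_t$, bound $\|T_t(x-h(x)1)\|_p\le\sum_{k\ge1}e^{\lambda_kt}\|x_k\|_p$, use $\|x_k\|_p\le\|x_k\|_\infty\le D(2k+1)\|x_k\|_2$ from rapid decay, and then Cauchy--Schwarz over $k$. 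This gives
\[
\|T_tx\|_p^2\le |h(x)|^2+(p-1)\Bigl(\sum_{k\ge1}D^2(2k+1)^2e^{2\lambda_kt}\Bigr)\sum_{k\ge1}\|x_k\|_2^2\le\|x\|_2^2
\]
once the parenthesized sum is $\le 1/(p-1)$. Your claim that ``recombining the mean costs a constant'' is precisely what Ricard--Xu circumvents: it separates the mean at no cost beyond the optimal $(p-1)$.

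So the structural inputs you list (eigenvalue bound, rapid decay, Kac type) are the right ones, but the fusion rules play no role here, and the proof is considerably simpler than your sketch suggests. The only piece you are missing is the reference to \cite{RX}.
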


\begin{proof}
By \cite[Theorem 1]{RX}, as for $S_n^+$, we have
\[||x||_p^2\leq||h(x)1||_p^2+(p-1)||x-h(x)1||_p^2,\;\;\;\;\;\;\;\;x\in L^{\infty}({\rm Aut}^+(B))\]
for $2<p<\infty$ by considering $L^{\infty}({\rm Aut}^+(B))$ and the Haar state. Therefore by writing $x=h(x)1+\sum_{k\geq1} x_k$ for $x\in \mathcal{O}({\rm Aut}^+(B))$ where $x_k\in V_k$, we have the following as in \cite[Theorem 2.4]{FHLUZ}:
\[||T_t(x)||_p^2\leq||T_t(h(x)1)||_p^2+(p-1)||T_t(x-h(x)1)||_p^2\]
\[\leq|h(x)|^2+(p-1)\left(\sum_{k\geq1}||T_t(x_k)||_p\right)^2
\leq|h(x)|^2+(p-1)\left(\sum_{k\geq1}e^{\lambda_k t}||x_k||_p\right)^2\]
\[\leq|h(x)|^2+(p-1)\left(\sum_{k\geq1}e^{\lambda_k t}||x_k||_{\infty}\right)^2
\leq|h(x)|^2+(p-1)\left(\sum_{k\geq1}e^{\lambda_k t}(\beta k+\gamma)||x_k||_2\right)^2\]
\[\leq |h(x)|^2+(p-1) \sum_{k\geq1}(e^{\lambda_k t}(\beta k+\gamma))^2\sum _{k\geq1} ||x_k||_2^2 \leq||x||_2^2\]
for $t\geq \tau_p$ with $\tau_p$ such that
\[(p-1) \sum_{k\geq1}(e^{\lambda_k \tau_p}(\beta k+\gamma))^2\leq 1.\]

 Let us consider the estimation of $\tau_p$. Using $\lambda_k \le -k/n$, we get 
 \[(p-1) \sum_{k\geq1}(e^{\lambda_k t}(\beta k+\gamma))^2 \le (p-1) \sum_{k\geq1}(e^{-\frac{k}{n} t}(\beta k+\gamma))^2.\]
 If we look at the condition $(p-1) \sum_{k\geq1}(e^{-\frac{k}{n} t}(\beta k+\gamma))^2= 1$, this is equivalent to 
 \[\frac{Y_t^3-2Y_t^2+9Y_t}{(1-Y_t)^3}=\frac{1}{(p-1)D^2}\]
  for $Y_t = exp(-2t/n)$. We can then take $\tau_p$ as the smallest root of this equation because $Y_t$ is a decreasing function in $t
 $.

\end{proof}

As hypercontractivity is equivalent to the logarithmic Sobolev inequalities, we also have the following (cf. \cite[Proposition 3.4, Theorem 3.5]{FHLUZ}, \cite{Gro75}, \cite[Section 3]{OZ}).

\begin{proposition}
There exists $t_0>0$ such that the following inequality holds for the heat semigroup $\{T_t\}$ of ${\rm Aut}^+(B)$ with $\dim B\geq5$:
\[||T_t:L^2({\rm Aut}^+(B))\to L^{q(t)}({\rm Aut}^+(B))||\leq 1,\;\;\;0\leq t \leq t_0\]
where $q(t)=\frac{4}{2-t/t_0}$.

Furthermore, for $x\in L^{\infty}({\rm Aut}^+(B))_+\cap D(T_L)$, we have
\[h(x^2\log{x})-||x||_2^2\log{||x||_2}\leq-\frac{c}{2}h(xT_Lx)\]
where $c=\frac{t_0}{2}$.
\end{proposition}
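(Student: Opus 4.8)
The plan is to run the standard Gross-type equivalence between hypercontractivity and the logarithmic Sobolev inequality, exactly as in \cite[Proposition 3.4, Theorem 3.5]{FHLUZ}; that argument uses only the spectral form $T_t(u_{ij}^{(k)}) = e^{-c_kt}u_{ij}^{(k)}$ of the heat semigroup (Theorem \ref{heat semigroup}), the eigenvalue bound $-\lambda_k \ge k/n$ for $\dim B \ge 5$, and the rapid decay estimate, all of which are available for ${\rm Aut}^+(B)$, so it transfers verbatim; in principle one could simply invoke it together with \cite{Gro75} and \cite[Section 3]{OZ}. Concretely I would proceed in two steps: first promote the finite-time hypercontractivity of Theorem \ref{hypercontractivity} to the one-parameter bound $\|T_t: L^2({\rm Aut}^+(B)) \to L^{q(t)}({\rm Aut}^+(B))\| \le 1$ on $[0,t_0]$, and then differentiate this family at $t=0$ to extract the log-Sobolev inequality.

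For the first step I would apply Theorem \ref{hypercontractivity} with $p=4$ to obtain $t_0 := \tau_4 > 0$ such that $\|T_{t_0}: L^2 \to L^4\| \le 1$. Since the semigroup is symmetric, its generator $T_L$ is a negative self-adjoint operator, concretely the real Fourier multiplier of Theorem \ref{heat semigroup}, so the holomorphic family $z \mapsto T_{zt_0} = e^{zt_0 T_L}$ is defined, analytic, and bounded on $\mathcal{O}({\rm Aut}^+(B))$ over the strip $\{0 \le \Re z \le 1\}$: on the line $\Re z = 0$ it is a unitary group on $L^2$, so $\|T_{iyt_0}: L^2 \to L^2\| = 1$, while on $\Re z = 1$ we have $T_{(1+iy)t_0} = T_{t_0}\,T_{iyt_0}$ and therefore $\|T_{(1+iy)t_0}: L^2 \to L^4\| \le 1$. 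The noncommutative Stein interpolation theorem then gives $\|T_{\theta t_0}: L^2 \to L^{q_\theta}\| \le 1$ with $\frac{1}{q_\theta} = \frac{1-\theta}{2} + \frac{\theta}{4}$; writing $t = \theta t_0$ this is precisely $\|T_t: L^2 \to L^{q(t)}\| \le 1$ for $t \in [0,t_0]$ with $q(t) = 4/(2 - t/t_0)$, which is the first displayed inequality of the proposition. The reason the $L^2$-endpoint on the imaginary axis is harmless is exactly that $T_L^* = T_L$, so no $L^p$-boundedness of the imaginary powers $e^{iyt_0T_L}$ (which fails for $p \ne 2$) is needed.

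For the second step I would fix $x \in L^{\infty}({\rm Aut}^+(B))_+ \cap D(T_L)$, assume $\|x\|_2 = 1$ by homogeneity, and replace $x$ by $x + \varepsilon 1$ (using $T_L 1 = 0$), letting $\varepsilon \downarrow 0$ at the end, so that $\log x \in L^{\infty}({\rm Aut}^+(B))$ is well defined. Setting $\Phi(t) := \|T_t x\|_{q(t)} = h\big((T_tx)^{q(t)}\big)^{1/q(t)}$, the first step gives $\Phi(t) \le \|x\|_2 = 1 = \Phi(0)$ on $[0,t_0]$, hence $\Phi'(0^+) \le 0$. Differentiating $\log\Phi$ at $0$, using $\frac{d}{dt}T_tx = T_LT_tx$, $q(0) = 2$, $h(x^2) = 1$, and the tracial derivative rules $\frac{d}{ds}h(g(y_s)) = h(g'(y_s)\dot y_s)$ and $\frac{\partial}{\partial q}h(y^q) = h(y^q\log y)$, yields an inequality of the form
\[q'(0)\, h(x^2\log x) + 2\, h(x T_L x) \le 0,\]
which after undoing the normalization reads $h(x^2\log x) - \|x\|_2^2\log\|x\|_2 \le -\frac{2}{q'(0)}\, h(x T_L x)$, a logarithmic Sobolev inequality with constant a fixed positive multiple of $t_0$; tracking the elementary numerical factors as in \cite{FHLUZ} puts it in the stated form with $c = t_0/2$.

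The hard part will be the rigor of the second step in the noncommutative $L^p$ setting: the differentiability of $t \mapsto \|T_tx\|_{q(t)}$ at $0$, the interchange of $\frac{d}{dt}$ with the trace for the non-polynomial power $(T_tx)^{q(t)}$, and the limit $\varepsilon \downarrow 0$. These are all standard in the Dirichlet form and semigroup literature (\cite{Gro75}, \cite[Section 3]{OZ}, \cite[Proposition 3.4, Theorem 3.5]{FHLUZ}) but must be carried out carefully with the noncommutative $L^p$ machinery; by comparison the interpolation in the first step, once one notices that self-adjointness of $T_L$ trivializes the imaginary axis, is routine.
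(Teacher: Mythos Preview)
Your proposal is correct and follows essentially the same route as the paper: the paper does not give a proof but simply invokes the Gross equivalence via \cite[Proposition 3.4, Theorem 3.5]{FHLUZ}, \cite{Gro75}, and \cite[Section 3]{OZ}, and what you have written is precisely a sketch of that argument (Stein interpolation of the hypercontractivity at $p=4$ to get the $L^2\to L^{q(t)}$ bound, followed by differentiation at $t=0$ to extract the log-Sobolev inequality). Your identification of the delicate points---differentiability of $t\mapsto\|T_tx\|_{q(t)}$ and the $\varepsilon\downarrow 0$ limit in the noncommutative setting---is accurate, and these are exactly the issues absorbed by the cited references.
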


 \begin{proposition}
 For a heat semigroup $\{T_t\}$ of ${\rm Aut}^+(B)$ with $\dim B\geq5$, there exists $\varepsilon_0\geq0$ such that for any $p\geq4-\varepsilon_0$,
 \[||T_t||_{2\to p}\leq1, \text{for\;all}\;t\geq\frac{dn}{2}\log{(p-1)}+(1-\frac{2}{p}n\log D)\]
 with $d=\frac{\log(11+\sqrt{105})-\log2}{\log3}.$
 \end{proposition}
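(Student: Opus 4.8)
The plan is to sharpen the estimate used in the proof of Theorem \ref{hypercontractivity}. There, the $L^p$-norm of each spectral component $x_k\in V_k$ was bounded crudely by its $L^\infty$-norm; instead I would interpolate. Since the Haar state $h$ of ${\rm Aut}^+(B)$ is a trace, for $p\ge 2$ one has $h(|x_k|^p)\le \|x_k\|_\infty^{\,p-2}\,h(|x_k|^2)$, hence
\[
\|x_k\|_p\le \|x_k\|_\infty^{1-2/p}\,\|x_k\|_2^{2/p}\le \bigl(D(2k+1)\bigr)^{1-2/p}\|x_k\|_2 ,
\]
the last step by the rapid decay estimate $\|x_k\|_\infty\le D(2k+1)\|x_k\|_2$. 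Writing $x=h(x)1+\sum_{k\ge 1}x_k$ and running the same chain as in Theorem \ref{hypercontractivity} (the Rothaus--Xu inequality \cite{RX} followed by Cauchy--Schwarz), one gets $\|T_t x\|_p\le \|x\|_2$ as soon as
\[
(p-1)\sum_{k\ge 1} e^{2\lambda_k t}\,\bigl(D(2k+1)\bigr)^{2-4/p}\le 1 ,
\]
and, using $\lambda_k\le -k/n$ from Theorem \ref{heat semigroup}, it suffices that $(p-1)\,D^{2-4/p}\sum_{k\ge 1} e^{-2kt/n}(2k+1)^{2-4/p}\le 1$.

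Next I would separate off the constant $D$. For $k\ge 1$ and $\Delta\ge 0$ one has $e^{-2k\Delta/n}\le e^{-2\Delta/n}$, so with $\Delta=(1-\tfrac 2p)\,n\log D$ the displayed condition at time $t=t'+\Delta$ follows from the $D$-free condition $(p-1)\sum_{k\ge 1} e^{-2kt'/n}(2k+1)^{2-4/p}\le 1$. At $p=4$ the exponent $2-4/p$ equals $1$, and with $Y=e^{-2t'/n}$ one has $\sum_{k\ge1}(2k+1)Y^k=\dfrac{3Y-Y^2}{(1-Y)^2}$, so the condition becomes $4Y^2-11Y+1\le 0$; the relevant (smaller) root is $Y=\tfrac{11-\sqrt{105}}{8}$, giving $t'=\tfrac n2\log\tfrac{8}{11-\sqrt{105}}=\tfrac n2\log\tfrac{11+\sqrt{105}}{2}=\tfrac{dn}{2}\log 3$. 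Thus $\|T_t\|_{2\to 4}\le 1$ for $t\ge \tfrac{dn}{2}\log 3+\tfrac12 n\log D$, which is exactly the asserted threshold at $p=4$.

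It then remains to propagate this to $p$ in a one-sided neighbourhood of $4$. Let $\tau^{0}_p$ denote the smallest $t'$ solving the $D$-free condition above; it is continuous in $p$ with $\tau^{0}_4=\tfrac{dn}{2}\log 3$, and for $p\le 4$ the bound $(2k+1)^{2-4/p}\le 2k+1$ keeps $\tau^0_p$ below its value at $p=4$. Comparing $\tau^0_p$ with $\tfrac{dn}{2}\log(p-1)$ near $p=4$ (they agree at $p=4$; a one-term comparison of derivatives, or a convexity estimate of $p\mapsto\tau^0_p$, gives the inequality on a neighbourhood) produces some $\varepsilon_0\ge 0$ with $\tau^0_p\le \tfrac{dn}{2}\log(p-1)$ for $p\ge 4-\varepsilon_0$; adding back $\Delta=(1-\tfrac2p)n\log D$ yields the stated bound. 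Alternatively, one can feed the $2\to4$ hypercontractivity at time $s_0=\tfrac{dn}{2}\log 3+\tfrac12 n\log D$ into Gross's equivalence with the logarithmic Sobolev inequality (as in the preceding Proposition): it gives a log-Sobolev constant $c_0=2s_0/\log 3$, hence $\|T_t\|_{2\to q(t)}\le 1$ with $q(t)=1+e^{2t/c_0}$, i.e.\ $\|T_t\|_{2\to p}\le 1$ for $t=\tfrac{\log(p-1)}{\log 3}\,s_0$, which recovers the leading term $\tfrac{dn}{2}\log(p-1)$ with a lower-order correction of the same order.

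The \emph{main obstacle} is precisely this last step. A single Cauchy--Schwarz over the whole tail $\sum_{k\ge1}$ is lossy once $p$ leaves a neighbourhood of $4$: for $p>4$ the series $\sum(2k+1)^{2-4/p}Y^k$ grows strictly faster than at $p=4$, and for $p$ near $2$ the factor $(2k+1)^{2-4/p}\to 1$ destroys the decay, so that $\tfrac{dn}{2}\log(p-1)+\cdots$ (which vanishes at $p=2$) is genuinely violated by the threshold coming from this estimate. Hence the honest statement is exactly for a one-sided range $p\ge 4-\varepsilon_0$, and optimizing $\varepsilon_0$ is a quantitative monotonicity question about $p\mapsto\tau^0_p$ rather than anything conceptually new; one also has to be careful that the ``shift'' bound $e^{-2k\Delta/n}\le e^{-2\Delta/n}$ is only tight at $k=1$, which is what forces the lower-order term to take the displayed form rather than something smaller.
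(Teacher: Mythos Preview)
Your setup through the $p=4$ computation is essentially the paper's: the interpolation $\|x_k\|_p\le\bigl(D(2k+1)\bigr)^{1-2/p}\|x_k\|_2$, the Rothaus--Xu plus Cauchy--Schwarz chain, and the bound $\lambda_k\le -k/n$ are exactly what the paper uses, and your ``shift trick'' is the same move as the paper's step $e^{2\lambda_k t}\le (p-1)^{-dk}D^{-2(1-2/p)}$ (dropping the exponent $k$ to $1$ on the $D$-factor). At $p=4$ the quadratic should read $4Y^2-11Y+1\ge 0$ rather than $\le 0$, but you land on the correct root and the correct threshold.

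The genuine gap is the final step. After your shift, the condition to verify at $t'=\tfrac{dn}{2}\log(p-1)$ is exactly
\[
R_p\;\coloneqq\;\sum_{k\ge1}(p-1)^{1-dk}(2k+1)^{2-4/p}\;\le\;1,
\]
and you must establish this for \emph{every} $p\ge 4-\varepsilon_0$, in particular for all $p\ge 4$; the range in the statement is unbounded above. A derivative comparison of $\tau^0_p$ with $\tfrac{dn}{2}\log(p-1)$ ``near $p=4$'' gives at best a neighbourhood of $4$, not the full half-line, and your Gross alternative yields the threshold $\tfrac{\log(p-1)}{\log 3}\,s_0=\tfrac{dn}{2}\log(p-1)+\tfrac{\log(p-1)}{2\log 3}\,n\log D$, whose lower-order term is not $(1-\tfrac2p)n\log D$ and is strictly larger for large $p$. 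The paper closes this directly: writing $\phi_k(p)=(p-1)^{1-dk}(2k+1)^{2-4/p}$, one has $\phi_k'(p)\le 0$ if and only if $\dfrac{4(p-1)}{p^2}\le\dfrac{dk-1}{\log(2k+1)}$; since the left side is $\le 1$ for $p\ge 2$ while the right side is $\ge\dfrac{d-1}{\log 3}>1$ for every $k\ge 1$ (because $d>1+\log 3$), each $\phi_k$ is decreasing on $[2,\infty)$. Hence $R_p\le R_4=1$ for all $p\ge 4$, and by continuity one gets some $\varepsilon_0\ge 0$. That termwise monotonicity computation is the missing idea in your argument.
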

 
 \begin{proof}
 By the H\"older inequality, for $p\geq1$, we have
 \[||x_k||_p\leq(D(k+1))^{1-\frac{2}{p}}||x_k||_2, \;x_k\in V_k.\]
 Therefore
 \[||T_t(x)||_p^2\leq |h(x)|^2+(p-1)\left(\sum_{k\geq1} e^{\lambda_k t}||x_k||_p\right)^2\]
 \[\leq |h(x)|^2+(p-1)\left(\sum_{k\geq1} e^{\lambda_k t}(D(2k+1))^{1-\frac{2}{p}}||x_k||_2\right)^2\]
 \[\leq  |h(x)|^2+(p-1) \sum_{k\geq1} e^{2 \lambda_k t}(D(2k+1))^{2(1-\frac{2}{p})}||x_k||_2^2.\]
 When $t\geq \frac{dn}{2}\log{(p-1)}+(1-\frac{2}{p})n\log{D}$ and $s\geq1$, 
 \[2\lambda_kt\leq-dk\log{(p-1)}-2\left(1-\frac{2}{p}\right)k\log{D}\]
 \[\leq -dk\log{(p-1)}-2\left(1-\frac{2}{p}\right)\log{D}\]
 and $e^{2\lambda_kt}\leq (p-1)^{-dk}D^{-2(1-\frac{2}{p})}.$
 
Consider $\phi(p)\coloneqq (p-1)^{1-dk}(2k+1)^{2(1-\frac{2}{p})}$.
 Therefore it suffices to show that for some $\varepsilon_0$, for any $p\geq 4-\varepsilon_0$,
 \[R_p\coloneqq\sum_{k\geq1}\phi(p)=\sum_{k\geq1}(p-1)^{1-dk}(2k+1)^{2(1-\frac{2}{p})}\leq 1.\]
Note that 
 \[R_4=\sum_{k\geq1}\frac{2k+1}{3^{dk-1}}=\frac{3(3\cdot3^d-1)}{(3^d-1)^2}\]
 and $d$ needs to satisfy $d\geq \frac{\log(11+\sqrt{105})-\log2}{\log3}$.
 
We have that $\phi'(p)\leq0$ if and only if
\[\frac{4(p-1)}{p^2}\leq\frac{dk-1}{\log{(2k+1)}}.\]
As $f_1(p)=\frac{4(p-1)}{p^2}$ 
is decreasing for $p\geq2$ and $f_2(k)=\frac{dk-1}{\log{(2k+1)}}$
 is increasing for 
 $k\geq1$, $d=\frac{\log(11+\sqrt{105})-\log2}{\log3}>\log{3}+1$ satisfies the following:
\[f_1(p)\leq f_1(2)=1<\frac{d-1}{\log{3}}=f_2(1)\leq f_2(s)\]
for any $p\geq2, s\geq1$.
Now we have that $d =\frac{\log(11+\sqrt{105})-\log2}{\log3}$ satisfies the desired condition.
 \end{proof}

\subsection{Spectral gap inequality}
 
 As in \cite[Sectoin 3]{FHLUZ}, we have the spectral gap inequality of the heat semigroup of ${\rm Aut}^+(B)$. 
 
  \begin{definition}
 Let $\mathbb{G}$ be a compact quantum group and $\{T_t\}$ be a semigroup on $\mathbb{G}$ with the generator $T_L$. We say that $\{T_t\}$ verifies a spectral gap inequality with constant $m>0$ if the following inequality is satisfied for any $x\in \mathcal{O}(\mathbb{G})_+$:
 \[m||x-h(x)||_2^2\leq-h(xT_Lx).\]
 \end{definition}
 
  \begin{proposition}
 The heat semigroup $\{T_t\}$ of ${\rm Aut}^+(B)$ with $\dim B\geq5$ verifies the spectral gap inequality with constant $m=\frac{1}{n}$ for any $x\in \mathcal{O}(\mathbb{G})_+$.
 \end{proposition}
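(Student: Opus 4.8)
The plan is to diagonalise $T_L$ against the Peter--Weyl decomposition of $L^2(\mathrm{Aut}^+(B))$ and then read off the spectral gap from the eigenvalue estimate $-\lambda_k\geq k/n$ recorded above. Since $x\in\mathcal{O}(\mathrm{Aut}^+(B))_+$ is in particular self-adjoint and lies in the algebra of matrix coefficients, it has a \emph{finite} expansion $x=\sum_{k\geq 0}x_k$ with $x_k\in V_k$; because $T_L$ commutes with the $*$-operation and each $V_k$ is $*$-invariant, uniqueness of the decomposition forces $x_k=x_k^*$, and the $V_0$-component is $x_0=h(x)1$. By Theorem~\ref{heat semigroup} (equivalently the eigenvalue description preceding Section~3.1), $T_Lx=\sum_{k\geq 1}\lambda_k x_k$, with $\lambda_0=0$.

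Next I would compute $h(xT_Lx)$. Writing $h(xT_Lx)=\sum_{j\geq0}\sum_{k\geq1}\lambda_k\,h(x_j x_k)$ and using that $\mathrm{Aut}^+(B)$ is of Kac type (so $h$ is a trace) together with $x_j^*=x_j$, we get $h(x_jx_k)=h(x_j^*x_k)=\langle x_j,x_k\rangle_{L^2}$. The Schur orthogonality relations for matrix coefficients of inequivalent irreducible representations give $\langle x_j,x_k\rangle_{L^2}=0$ for $j\neq k$, so all cross terms drop and
\[
-h(xT_Lx)=\sum_{k\geq 1}(-\lambda_k)\,\|x_k\|_2^2 .
\]
This is the only step where one must be slightly careful, and it is precisely the Kac-type hypothesis plus orthogonality of the eigenspaces that makes it work.

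Finally I would invoke the estimate $-\lambda_k=\dfrac{\Pi_k'(n)}{2\sqrt{n}\,\Pi_k(n)}\geq \dfrac{k}{n}$ for $n=\dim B\geq 5$, which is stated in the displayed chain of inequalities before Section~3.1. In particular $-\lambda_k\geq \tfrac1n$ for every $k\geq 1$, hence
\[
-h(xT_Lx)\;\geq\;\frac1n\sum_{k\geq 1}\|x_k\|_2^2\;=\;\frac1n\,\bigl\|x-h(x)1\bigr\|_2^2 ,
\]
where the last equality is Parseval's identity for the orthogonal decomposition $x-h(x)1=\sum_{k\geq 1}x_k$. This establishes the spectral gap inequality with constant $m=\tfrac1n$.

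\textbf{Remark on the difficulty.} There is no serious obstacle: the argument is the same as for $S_n^+$ in \cite[Section~3]{FHLUZ}, and the entire content is the lower bound $-\lambda_1\geq 1/n$ on the bottom of the spectrum of $-T_L$ (the ``spectral gap''), which has already been extracted from the Chebyshev-polynomial formula for the eigenvalues. One might additionally note that the constant $1/n$ is of the expected order, matching the classical scaling of the heat semigroup on $\mathrm{Aut}^+(B)$, and that positivity of $x$ is not actually used beyond self-adjointness.
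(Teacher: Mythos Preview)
Your proof is correct and follows essentially the same route as the paper's: decompose $x=\sum_k x_k$ along the Peter--Weyl eigenspaces, use orthogonality to reduce $-h(xT_Lx)$ to $\sum_{k\geq 1}(-\lambda_k)\|x_k\|_2^2$, and then apply the bound $-\lambda_k\geq k/n\geq 1/n$ for $k\geq 1$. Your write-up is in fact slightly cleaner than the paper's (which writes $\|x\|_2^2$ where $\|x-h(x)1\|_2^2$ is meant before correcting in the last line), and your closing observation that only self-adjointness of $x$ is used---not positivity---is accurate; indeed one can bypass the $x_k=x_k^*$ step entirely by writing $h(xT_Lx)=h(x^*T_Lx)=\langle x,T_Lx\rangle$ directly.
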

 
 The proof is the same as that of \cite[Proposition 3.2]{FHLUZ}.
 \begin{proof}
 For $x\in \mathcal{O}(\mathbb{G})$, we write $x=\sum_k x_k$ where $x_k\in V_k$ are the elements of the eigenspaces of $T_L$. Then we have 
 \[h(xT_Lx)=\sum_k -\frac{\Pi_k'(n)}{2\sqrt{n}\Pi_k(n)}||x_k||_2^2.\]
 As $V_k$ are in orthogonal direct sum and $\frac{k}{n}\leq -\lambda_k$, we have that
 \[-h(xT_Lx)\geq \frac{1}{n}||x||_2^2.\]
 We also have $||x-h(x)||_2\leq||x||_2$ because $V_0=\mathbb{C}1$ and hence $||x-h(x)||_2^2\leq-nh(xT_Lx).$
 \end{proof}
 
 \section{The sharp Sobolev embedding properties for ${\rm Aut}^+(B)$}

Another application of the Theorem \ref{heat semigroup} is the sharp Sobolev embedding property. It is proved for $S_n^+$ in \cite{You20}. In this section, we see that the sharp Sobolev embedding property can also be obtained for ${\rm Aut}^+(B)$ in the same way. Although the proofs are the same for $S_n^+$, we include the proofs for ${\rm Aut}^+(B)$ for the sake of the completeness.

In the following, we use the concept of length function for compact matrix quantum groups, which include ${\rm Aut}^+(B)$.

 \begin{definition}
A compact quantum group $\mathbb{G}$ is called a compact matrix quantum group if there is a unitary representation $U$ such that any $U_{\alpha}\in {\rm Irr}(\mathbb{G})$ is a irreducible component of $U^{\otimes n}$ for some $n\in \mathbb{N}\cup\{0\}$. 
\end{definition}

\begin{definition}
Let $\mathbb{G}$ be a compact matrix quantum group with a finite dimensional unitary representation $U$ satisfying the above condition.
\begin{itemize}
\item We define a length function on ${\rm Irr}(\mathbb{G})$ with respect to $U$ by
\[|\alpha|=\min\{n\in\{0\}\cup\mathbb{N}:U_{\alpha}\;{\rm is\;an\;irreducible\;component\;of}\;U^{\otimes n}\}\] 
for $\alpha\in{\rm Irr}(\mathbb{G})$.
\item We define the $k$-sphere $S_k$ by
\[S_k\coloneqq\{\alpha\in{\rm Irr}(\mathbb{G}):|\alpha|=k\}.\]
\end{itemize}
\end{definition}

Note that for ${\rm Aut}^+(B)$, by considering the length function with respect to the fundamental representation corresponding to the defining coaction, we have $|U_k|=k$ for $U_k\in {\rm Irr}({\rm Aut}^+(B))$.

\subsection{The sharp Sobolev embedding properties}

It is known that we have the Hardy-Littlewood-Sobolev inequality for ${\rm Aut}^+(B)$ as follows \cite[Theorem 4.5]{You20}. In this section, we prove its sharpness.

\begin{theorem}\label{Sobolev}
Let $B$ be a finite dimensional C*-algebra with $\dim B\geq5$.
Then for any $p\in (1, 2]$ and any $f\in L^p({\rm Aut}^+(B))$, we have
 \[\left(\sum_{k\geq0}\frac{n_k}{(1+k)^{s(\frac{2}{p}-1)}}||\widehat{f}(k)||_{HS}^2\right)^{\frac{1}{2}}\lesssim||f||_p\]
if $s\geq 3$.
\end{theorem}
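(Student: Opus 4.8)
# Proof Proposal

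The plan is to reduce the Hardy--Littlewood--Sobolev inequality on $\mathrm{Aut}^+(B)$ to a weighted estimate that can be attacked by complex interpolation, exactly as in \cite{You20}. The two endpoints are $p=2$, where the claimed inequality is just the Plancherel identity $\left(\sum_{k\geq 0} n_k\|\widehat{f}(k)\|_{HS}^2\right)^{1/2} = \|f\|_2$ (with the weight $(1+k)^{s(2/p-1)}=1$), and the ``$p=1$'' endpoint, where the target becomes $\left(\sum_{k\geq 0}(1+k)^{-s}n_k\|\widehat f(k)\|_{HS}^2\right)^{1/2}\lesssim\|f\|_1$; since $\|\widehat f(k)\|_{HS}\leq \|\widehat f(k)\|_{HS}$ trivially and the Fourier transform maps $L^1$ into $\ell^\infty(\widehat{\mathbb{G}})$ contractively, this amounts to checking that $\sum_{k\geq 0}(1+k)^{-s}n_k^2<\infty$. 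First I would record the growth rate $n_k=\dim U_k=\Pi_k(n)$; because $\mathrm{Aut}^+(B)$ has the same fusion rules as $SO(3)$ and $\Pi_k=S_{2k}(\sqrt{x})$ evaluated at $x=n$, one has $n_k\sim r^{2k}$ for $r>1$ determined by $n=\dim B$ (for $S_n^+$ this is the familiar $n_k\sim$ growth with $\sqrt n \pm$). The key point is that the weight $(1+k)^{-s}$ alone does not make $\sum n_k^2 (1+k)^{-s}$ converge — so the $L^1$ endpoint as literally stated is false, and the correct endpoint must instead incorporate the heat semigroup.

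This is where Theorem \ref{heat semigroup} enters. The right framework, following \cite{You20}, is to use the subordinated semigroup / Riesz-type potential built from $T_t$: the operator $(1-T_L)^{-s/2}$ (or more precisely a Bessel-type potential $J_s$ defined spectrally by $J_s(u_{ij}^{(k)}) = (1+k)^{-s/2}u_{ij}^{(k)}$ using $c_k\sim k$) is bounded from $L^p$ to $L^2$ for the right $s$. So the plan is: (i) show the Bessel potential $J_s\colon L^1(\mathrm{Aut}^+(B))\to L^2(\mathrm{Aut}^+(B))$ is bounded when $s$ is large enough — concretely $s\geq 3$ — by writing $J_s$ as an integral $\int_0^\infty t^{s/2-1}e^{-t}T_t\,dt$ (up to a Gamma-function constant) and estimating $\|T_t\colon L^1\to L^2\|$ via the ultracontractivity/Sobolev estimates already established (the dual of the $L^2\to L^\infty$ bound $\|T_t x\|_\infty\leq \sqrt{f(t)}\|x\|_2$ on each eigenspace, summed with the eigenvalue decay $\lambda_k\leq -k/n$, gives $\|T_t\colon L^1\to L^2\|\lesssim t^{-3/2}$ as $t\to 0$), so the integral converges near $t=0$ precisely when $s/2-1-3/2>-1$, i.e. $s>3$, with the borderline $s=3$ handled by the same argument as in \cite{You20}; (ii) interpolate between this $L^1\to L^2$ bound (weight exponent $s$) and the trivial $L^2\to L^2$ bound (weight exponent $0$) using the vector-valued $\ell^p$-interpolation theorem quoted in the Preliminaries, which produces exactly the weight $(1+k)^{-s(2/p-1)}=(1+k)^{-s\theta}$ with $\theta=2/p'=2/p-1$ after identifying $1/p = (1-\theta)/1 + \theta/2$; (iii) unwind: the resulting statement ``$J_{s\theta}\colon L^p\to L^2$ bounded'' is literally the claimed Hardy--Littlewood--Sobolev inequality after writing out both sides in terms of $\widehat f(k)$ and $\|\cdot\|_{HS}$ via the Plancherel identity.

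The main obstacle I expect is step (i): pinning down the sharp short-time behaviour $\|T_t\colon L^1\to L^2\|\lesssim t^{-3/2}$, which is what forces the exponent $3$. This requires combining the dimension growth $n_k=\Pi_k(n)$ with the eigenvalue lower bound $-\lambda_k\geq k/n$ and the pointwise bound $\|x\|_\infty\leq D(2k+1)\|x\|_2$ on $V_k$ in a summation of the form $\|T_t\|_{1\to 2}^2 \sim \sup_{k}\, e^{2\lambda_k t}\, (\text{something polynomial in }k)$ dual-paired, or more cleanly by the heat-kernel trace $\sum_k m_k e^{2\lambda_k t} = \sum_k \Pi_k(n)^2 e^{2\lambda_k t}$ which, since $\Pi_k(n)\sim r^{2k}$ and $-\lambda_k\sim k/n$, behaves like $\sum_k e^{(4\log r)k - (2/n)k t}$ — a geometric series whose ``effective number of terms'' near the critical time scales like $t^{-1}$ but whose weighted version, after inserting the Sobolev factor, scales like $t^{-3}$. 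Making this precise (identifying the exponent $3$ as $2\beta+1$ with $\beta=1$ the rapid-decay exponent, or equivalently as the ``dimension'' of $SO(3)$) is the crux, but it is exactly the computation carried out in \cite{You20} for $S_n^+$, and since the eigenvalue asymptotics $c_k\sim k$ and the dimension formula $n_k=\Pi_k(n)$ are the \emph{only} inputs and both are identical in form to the $S_n^+$ case, the argument transfers verbatim. The sharpness (the ``only if'' direction, proved later in the paper) would then test the inequality against the specific functions $f$ supported on a single $V_k$ — namely characters $\chi_k$ — and use the growth of $n_k$ to see that $s<3$ fails; but that is outside the present statement, which only claims the ``if'' direction.
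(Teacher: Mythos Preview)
Your overall strategy matches the paper's (which follows \cite{You20}): establish a sharp short-time ultracontractivity estimate for the heat semigroup and then pass to the Hardy--Littlewood--Sobolev inequality. The paper packages the second step as a black-box citation of Xiong's theorem (Theorem~\ref{Xiong}, from \cite{Xio17}), which says that $\|S_t\|_{L^p\to L^q}\lesssim t^{-s(1/p-1/q)}$ is equivalent to boundedness of $(1-T_L)^{-s(1/p-1/q)}\colon L^p\to L^q$; you instead sketch a direct proof of that implication via the Bessel-potential integral $J_s=\Gamma(s/2)^{-1}\int_0^\infty t^{s/2-1}e^{-t}T_t\,dt$ together with interpolation. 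These are the same argument, yours being the unrolled version; the paper also uses the $L^2\to L^\infty$ formulation (Proposition~\ref{iff3}) where you dualise to $L^1\to L^2$, but since $T_t$ is self-adjoint this is immaterial.

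There is one genuine muddle in your step (i) that would derail you if you pursued it. The heat-kernel trace $\sum_k m_k e^{2\lambda_k t}=\sum_k \Pi_k(n)^2 e^{2\lambda_k t}$ that you write down actually \emph{diverges} for small $t$: since $\Pi_k(n)\sim r^{2k}$ grows exponentially while $-\lambda_k\sim k/n$ grows only linearly, the series $\sum_k e^{(4\log r-2t/n)k}$ converges only for $t>2n\log r$. So this quantity cannot produce the polynomial blow-up $t^{-3}$ you want, and the route via the full heat kernel (which controls $\|T_t\|_{L^1\to L^2}$ for classical compact groups) fails here. The paper instead uses the rapid-decay property (Proposition~\ref{infty<2}): for $f\in L^2$ one has $\|S_t f\|_\infty\lesssim\sqrt{C_\omega}\,\|f\|_2$ with $C_\omega=\sum_k (1+k)^2 e^{-2t(1+c_k)}$, where the rapid-decay weight $(1+k)^2$ replaces $n_k^2$. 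This polynomial-weighted sum does converge and scales like $t^{-3}$ as $t\to 0^+$ because $c_k\sim k$; that is precisely where the exponent $3=2\beta+1$ with $\beta=1$ (your own parenthetical remark) comes from. You mention this second route alongside the first, but your text treats them as alternatives when in fact only the rapid-decay one works in this non-amenable setting.
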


First we review that the following \cite[Proposition 5.2]{You20} is applicable to ${\rm Aut}^+(B)$.

\begin{proposition}\label{infty<2}
Let $\mathbb{G}$ be a compact matrix quantum group whose dual has the rapid decay property with $r_k\lesssim (1+k)^\beta$ and let $\omega:\{0\}\cup\mathbb{N}\to (0, \infty)$ be a positive function such that $C_{\omega}=\sum_{k\geq0}\frac{(1+k)^{2\beta}}{e^{2\omega(k)}}<\infty$.
Then we have 
\[\left \lVert \sum_{\alpha\in {\rm Irr}(\mathbb{G})} \frac{n_\alpha}{e^{\omega(|\alpha|)}} {\rm tr}(\hat{f}(\alpha)u^{\alpha})\right\rVert_{\infty} \lesssim \sqrt{C_{\omega}}||f||_2\]
for any $f\in L^2(\mathbb{G})$.
In particular, we have
\[\left\lVert \sum_{\alpha\in {\rm Irr}(\mathbb{G}) }\frac{n_\alpha}{(1+|\alpha|)^s} {\rm tr}(\hat{f}(\alpha)u^{\alpha})\right\rVert_{\infty} \lesssim ||f||_2\]
for any $f\in L^2(\mathbb{G})$.
\end{proposition}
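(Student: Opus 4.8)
The plan is to reduce the estimate to the rapid decay bound by grouping the Fourier expansion of $f$ according to the length function and then applying Cauchy--Schwarz. For $f\in L^2(\mathbb{G})$ with Fourier series $f\sim\sum_{\alpha\in{\rm Irr}(\mathbb{G})}n_\alpha\,{\rm tr}(\widehat f(\alpha)u^\alpha)$ and for each $k\geq0$, set $f_k\coloneqq\sum_{\alpha\in S_k}n_\alpha\,{\rm tr}(\widehat f(\alpha)u^\alpha)$, the component of $f$ supported on the $k$-sphere. Each $S_k$ is finite and each irreducible is finite dimensional, so $f_k\in\mathcal{O}(\mathbb{G})\subset L^\infty(\mathbb{G})$; moreover, since matrix coefficients of distinct irreducibles are $L^2$-orthogonal, the Plancherel identity gives $\|f\|_2^2=\sum_{k\geq0}\|f_k\|_2^2$. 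The element whose $\|\cdot\|_\infty$-norm we must bound is then exactly $\sum_{k\geq0}e^{-\omega(k)}f_k$.

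First I would use the triangle inequality in the C*-algebra $L^\infty(\mathbb{G})$ together with the rapid decay hypothesis $r_k\lesssim(1+k)^\beta$ (which, applied to the sphere component, gives $\|f_k\|_\infty\lesssim(1+k)^\beta\|f_k\|_2$) to obtain
\[\Bigl\lVert\sum_{k\geq0}e^{-\omega(k)}f_k\Bigr\rVert_\infty\;\leq\;\sum_{k\geq0}e^{-\omega(k)}\|f_k\|_\infty\;\lesssim\;\sum_{k\geq0}\frac{(1+k)^\beta}{e^{\omega(k)}}\|f_k\|_2.\]
Then Cauchy--Schwarz on $\ell^2(\{0\}\cup\mathbb{N})$ together with the Plancherel identity above yields
\[\sum_{k\geq0}\frac{(1+k)^\beta}{e^{\omega(k)}}\|f_k\|_2\;\leq\;\Bigl(\sum_{k\geq0}\frac{(1+k)^{2\beta}}{e^{2\omega(k)}}\Bigr)^{1/2}\Bigl(\sum_{k\geq0}\|f_k\|_2^2\Bigr)^{1/2}\;=\;\sqrt{C_\omega}\,\|f\|_2,\]
which is the claim; the convergence of the defining series in $L^\infty(\mathbb{G})$ follows a posteriori by applying the same chain of inequalities to its tails, using $C_\omega<\infty$. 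The ``in particular'' statement is the special case $\omega(k)=s\log(1+k)$, for which $e^{\omega(k)}=(1+k)^s$ and $C_\omega=\sum_{k\geq0}(1+k)^{2\beta-2s}$, which is finite as soon as $s>\beta+\tfrac12$; for ${\rm Aut}^+(B)$ one has $\beta=1$ by the rapid decay estimate \cite{Bra13} recalled above, so $C_\omega<\infty$ throughout the range of $s$ used in Theorem \ref{Sobolev}.

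I do not expect a serious obstacle: the argument is the standard ``sum the length-$k$ pieces, use rapid decay, then Cauchy--Schwarz'' scheme and uses nothing beyond the rapid decay property and Plancherel. The one place that needs a word of care is the passage from the rapid decay property, phrased for the span of the matrix coefficients of a single irreducible, to the bound $\|f_k\|_\infty\lesssim(1+k)^\beta\|f_k\|_2$ on the whole $k$-sphere: for ${\rm Aut}^+(B)$ this is automatic because $S_k=\{U_k\}$ is a singleton, and in general one either reads the rapid decay property as a statement about sphere projections or absorbs the polynomially bounded cardinality $|S_k|$ into $\beta$.
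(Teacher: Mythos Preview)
The paper does not prove this proposition: it is quoted verbatim from \cite[Proposition~5.2]{You20} and stated without proof, so there is no ``paper's own proof'' to compare against. Your argument is correct and is the standard one (sphere decomposition, rapid decay bound $\|f_k\|_\infty\lesssim(1+k)^\beta\|f_k\|_2$, then Cauchy--Schwarz against $\sqrt{C_\omega}$), and it is essentially what the cited reference does as well. Your closing remark about the sphere-wise versus per-irreducible reading of rapid decay is apt; in the general formulation used in \cite{You20} the property is stated for the full $k$-sphere, so no extra work is needed, and for ${\rm Aut}^+(B)$ the distinction is vacuous since $S_k=\{U_k\}$.
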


We also have a version of \cite[Theorem 5.3]{You20} for ${\rm Aut}^+(B)$. 
 
\begin{theorem}\label{K}
There exists a universal constant $K>0$ such that $\sum_{k\geq0}\frac{(1+k)^2}{e^{2\omega(k)}}\leq KC^2$ for a finite dimensional C*-algebra $B$ with $\dim B\geq4$ and a positive function $\omega:\{0\}\cup\mathbb{N}\to(0,\infty)$ satisfying 
\[\left\lVert\sum_{\alpha\in{\rm Irr}(\mathbb{G})}\frac{n_{\alpha}}{e^{\omega(|\alpha|)}}{\rm tr}(\widehat{f}(\alpha)u_{\alpha})\right\rVert_{\infty}\leq C||f||_2\]
for any $f\in L^2({\rm Aut}^+(B))$.
\end{theorem}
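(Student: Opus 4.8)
The plan is to test the hypothesized $L^2\to L^\infty$ estimate against the characters $\chi_k$ of the irreducibles $U_k\in{\rm Irr}({\rm Aut}^+(B))$, exploiting that all of them are polynomials in the single self-adjoint element $\chi=\chi_0+\chi_1$ (the character of the fundamental representation $U$), whose spectrum in $C_r({\rm Aut}^+(B))$ is precisely the interval $[0,4]$. I would first establish this spectral fact: the distribution of $\chi$ with respect to the Haar state $h$ is determined by its moments $h(\chi^m)=\dim{\rm Hom}(U_0,U^{\otimes m})$, and since ${\rm Aut}^+(B)$ is of Kac type and monoidally equivalent to $S_n^+$ with $n=\dim B\geq 4$, this intertwiner dimension equals the corresponding one for $S_n^+$, which is the number of noncrossing partitions of $\{1,\dots,m\}$. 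Hence $\chi$ follows the free Poisson (Marchenko--Pastur) law of parameter $1$, supported on $[0,4]$, and since the Haar state is faithful on $L^\infty({\rm Aut}^+(B))$ we conclude ${\rm spec}(\chi)=[0,4]$; in particular $4\in{\rm spec}(\chi)$.

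Next I would identify $\chi_k$ as a polynomial in $\chi$. The $SO(3)$ fusion rule gives $\chi_1\chi_k=\chi_{k-1}+\chi_k+\chi_{k+1}$, and $\chi_1=\chi-1$, so
\[\chi_0=1,\qquad\chi_1=\chi-1,\qquad\chi_{k+1}=(\chi-2)\chi_k-\chi_{k-1}.\]
One checks that this is exactly the recursion obeyed by $x\mapsto S_{2k}(\sqrt x)$, whence $\chi_k=S_{2k}(\sqrt\chi)$ in $C_r({\rm Aut}^+(B))$. Using $4\in{\rm spec}(\chi)$ together with the elementary facts $S_{2k}(2)=2k+1$ and $|S_{2k}(s)|\leq 2k+1$ for $|s|\leq 2$, functional calculus gives, for any finitely supported family of reals $(b_k)_{k\geq0}$,
\[\left\|\sum_{k\geq0}\frac{b_k}{e^{\omega(k)}}\chi_k\right\|_\infty=\max_{s\in[0,2]}\left|\sum_{k\geq0}\frac{b_k}{e^{\omega(k)}}S_{2k}(s)\right|\geq\left|\sum_{k\geq0}\frac{b_k}{e^{\omega(k)}}(2k+1)\right|.\]
On the other hand, the orthonormality relations for characters (valid since ${\rm Aut}^+(B)$ is of Kac type) give $\|\sum_k b_k\chi_k\|_2=(\sum_k b_k^2)^{1/2}$, and for $f:=\sum_k b_k\chi_k\in\mathcal{O}({\rm Aut}^+(B))$ one has $\widehat f(U_k)=b_k n_k^{-1}I$, so the multiplier in the hypothesis sends $f$ to $\sum_k\frac{b_k}{e^{\omega(k)}}\chi_k$; thus the hypothesis reads $\|\sum_k\frac{b_k}{e^{\omega(k)}}\chi_k\|_\infty\leq C(\sum_k b_k^2)^{1/2}$.

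Finally I would, for each $M\in\mathbb{N}$, take $b_k=e^{-\omega(k)}(2k+1)$ for $k\leq M$ and $b_k=0$ otherwise. The two displayed bounds combine to $\sum_{k\leq M}e^{-2\omega(k)}(2k+1)^2\leq C\big(\sum_{k\leq M}e^{-2\omega(k)}(2k+1)^2\big)^{1/2}$, i.e. $\sum_{k\leq M}e^{-2\omega(k)}(2k+1)^2\leq C^2$; letting $M\to\infty$ and using $(1+k)^2\leq(2k+1)^2$ yields $\sum_{k\geq0}\frac{(1+k)^2}{e^{2\omega(k)}}\leq C^2$, which is the assertion with $K=1$.

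I expect the main obstacle to be the first step: showing that the spectrum of $\chi$ in the reduced C*-algebra reaches its right endpoint $4$. This is exactly where the hypothesis $\dim B\geq4$ is needed — for $\dim B\leq3$ the quantum group is the classical $S_n$ and $\chi$ has an atomic distribution whose support stops short of $4$ — and it is what makes $\|\chi_k\|_\infty$ grow linearly in $k$; without it the test functions above would be useless. The remaining pieces (the recursion $\chi_k=S_{2k}(\sqrt\chi)$, the character orthogonality relations, the estimate $|S_{2k}|\leq 2k+1$ on $[-2,2]$, and the Cauchy--Schwarz choice of $(b_k)$) are routine.
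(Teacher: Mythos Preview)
Your argument is correct and takes a genuinely different route from the paper's. The paper proceeds via Lemma~\ref{Phi}, the character--transfer isometry $\Phi:L^p({\rm Aut}^+(B))\to L^p(SO(3))$ on character combinations: it pushes the problem to $SO(3)$, introduces the Poisson semigroup $\mu_t\sim\sum_k e^{-t\kappa_k^{1/2}}(2k+1)\widetilde{\chi_k}$ as an $L^1$ regularizer, applies the hypothesis twice (once as stated and once in the dual form $\|M_\omega\|_{L^1\to L^2}\le C$) to get $\sum_k e^{-t\kappa_k^{1/2}}e^{-2\omega(k)}(2k+1)^2\le C^2\|\mu_t\|_{L^1(SO(3))}=C^2$, and then lets $t\to 0^+$. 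You instead establish the spectral fact ${\rm spec}(\chi)=[0,4]$ directly---by observing that the moments $h(\chi^m)=\dim{\rm Hom}(U_0,U^{\otimes m})$ are categorical, hence agree with those for $S_n^+$ ($n\ge 4$), which are the Catalan numbers, giving the Marchenko--Pastur law of parameter $1$---and then use functional calculus together with $\chi_k=S_{2k}(\sqrt{\chi})$ and $S_{2k}(2)=2k+1$ to evaluate the $L^\infty$ norm at the spectral endpoint. Your Cauchy--Schwarz choice $b_k=e^{-\omega(k)}(2k+1)$ then delivers the bound with $K=1$ in one stroke, without any regularization or limit. What the paper's approach buys is that it can cite Lemma~\ref{Phi} as a black box and avoid the moment/spectrum computation; what your approach buys is a self-contained argument that does not need the $SO(3)$ transfer, the Poisson semigroup, or the duality step, and it yields the sharp constant immediately.
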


The proof of \cite[Theorem 5.3]{You20} is also valid for ${\rm Aut}^+(B)$ because we have the following statement by \cite[Remark 4.6, Lemma 4.7]{You18}.

\begin{lemma}\label{Phi}
Let $B$ be a finite dimensional C*-algebra with $\dim B\geq4$.
Consider the characters of irreducible representations $\chi_n$ of ${\rm Aut}^+(B)$ and $\widetilde{\chi_n}$ of $SO(3)$.
For $f\sim \sum_{n\geq0}c_n\chi_n \in L^p({\rm Aut}^+(B))$, the associate function $\Phi(f)\sum_{n\geq0}c_n\widetilde{\chi_n} \in L^p(SO(3))$ has the same norm:
\[||f||_{L^p({\rm Aut}^+(B))}=||\Phi(f)||_{L^p(SO(3))}\]
for any $p\in [1, \infty]$.
\end{lemma}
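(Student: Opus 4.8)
The plan is to reduce the statement to the known norm-preservation facts about central elements (characters) under the monoidal equivalence between $\mathrm{Aut}^+(B)$ and $S_n^+$, and then to the classical comparison with $SO(3)$. First I would recall that for a compact quantum group of Kac type the central subalgebra of $L^p$ generated by the characters $\{\chi_n\}$ of irreducible representations depends, as an abstract normed space, only on the fusion ring together with the dimensions $n_k$; concretely, for $f\sim\sum_n c_n\chi_n$ one has $\|f\|_p$ expressible through the joint ``noncommutative'' measure associated to the character algebra. Since $\mathrm{Aut}^+(B)$, $S_n^+$ (with $n=\dim B$) and $SO(3)$ all share the $SO(3)$-fusion rules, the central $L^p$-norms agree across all three provided the quantum dimensions $\dim_q U_k$ match; for Kac type all quantum dimensions equal the ordinary dimensions $n_k=\Pi_k(n)^2$ for $\mathrm{Aut}^+(B)$ and $S_n^+$, whereas for classical $SO(3)$ the dimension of the spin-$k$ representation is $2k+1$. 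This mismatch is precisely why \cite{You18} works not with $SO(3)$ directly but with a rescaled/weighted comparison, which is what Remark 4.6 and Lemma 4.7 of \cite{You18} encode.

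So more carefully, the key steps are: (1) invoke the monoidal equivalence (Proposition above) to identify $\mathcal{O}(\mathrm{Aut}^+(B))$ and $\mathcal{O}(S_n^+)$ as having the same central $*$-subalgebra generated by characters, and check that the Haar states restrict to the same tracial functional on that subalgebra — this is automatic from Kac type plus equal quantum dimensions, since the Haar state of a character $\chi_\alpha\chi_\beta^*$ equals $\dim\mathrm{Hom}(U_\beta,U_\alpha)$, a purely categorical datum. Hence $\|f\|_{L^p(\mathrm{Aut}^+(B))}=\|f'\|_{L^p(S_n^+)}$ for the matching central element $f'$, for every $p\in[1,\infty]$, because the $L^p$-norm of an element of a tracial von Neumann algebra is determined by its $*$-distribution with respect to the trace. (2) Cite \cite[Remark 4.6, Lemma 4.7]{You18} for the corresponding identity relating central elements of $S_n^+$ to central elements of $SO(3)$: there the map $\chi_n\mapsto\widetilde{\chi_n}$ is shown to be an isometry on the relevant central $L^p$-spaces. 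Composing (1) and (2) gives the map $\Phi$ of the statement, with $\|f\|_{L^p(\mathrm{Aut}^+(B))}=\|\Phi(f)\|_{L^p(SO(3))}$. (3) Verify that the assignment $\Phi$ is well-defined and linear on the dense subalgebra and extends isometrically to $L^p$, which is immediate once the norm identity is established on the $*$-algebra of finite character combinations.

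The main obstacle is step (1): one must be sure that the monoidal equivalence really does induce an isometric identification of the central $L^p$-spaces, not merely an isomorphism of fusion rings. The subtlety is that $L^p$-norms of noncommutative functions are not determined by the fusion ring alone — they depend on the trace, i.e. on the Haar state. The point to nail down is that on the central subalgebra generated by characters, the Haar state is completely determined by the fusion rules (via $h(\chi_\alpha\chi_\beta^*)=\dim\mathrm{Hom}(U_\beta,U_\alpha)$ and linearity/continuity), so two Kac-type quantum groups with the same fusion rules have isometric central $L^p$-spaces under the fusion-ring isomorphism. Once this is granted, everything else is bookkeeping. I would therefore structure the proof as: recall the formula for $h$ on products of characters; note it is categorical; conclude the isometry for $\mathrm{Aut}^+(B)\leftrightarrow S_n^+$; then quote \cite{You18} for $S_n^+\leftrightarrow SO(3)$ and compose. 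Since the paper states the proof ``is also valid for $\mathrm{Aut}^+(B)$ because we have Lemma \ref{Phi},'' the write-up can be brief: essentially observe that Lemma \ref{Phi} follows by combining the monoidal equivalence with \cite[Remark 4.6, Lemma 4.7]{You18}, using that both $\mathrm{Aut}^+(B)$ and $S_n^+$ are of Kac type so that their central $L^p$-norms are computed from the same $SO(3)$-type fusion data.
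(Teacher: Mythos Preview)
The paper does not actually supply a proof of this lemma; it simply attributes the statement to \cite[Remark 4.6, Lemma 4.7]{You18}. Your proposal therefore goes further than the paper does, and your argument is essentially correct.

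That said, you have tangled yourself up unnecessarily. Your own observation in step~(1) --- that for any Kac-type compact quantum group
\[
h(\chi_{\alpha_1}\cdots\chi_{\alpha_k})=\dim\mathrm{Hom}(\mathbf{1},\,U_{\alpha_1}\otimes\cdots\otimes U_{\alpha_k})
\]
is a pure fusion multiplicity --- already settles the whole lemma without any detour through $S_n^+$. The dimensions $n_k$ never enter these moments (one has $h(\chi_\gamma)=\delta_{\gamma,\mathrm{triv}}$ regardless of $n_\gamma$), so the $*$-distribution of any finite character combination under $h$ depends only on the fusion ring. Since $\mathrm{Aut}^+(B)$ and $SO(3)$ share the $SO(3)$ fusion ring, the commutative $*$-probability spaces $(\mathrm{span}\{\chi_k\},h)$ and $(\mathrm{span}\{\widetilde{\chi_k}\},h_{SO(3)})$ are isomorphic, and hence so are their $L^p$-completions for every $p\in[1,\infty]$. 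Your worry about the dimension mismatch $\Pi_k(n)$ versus $2k+1$ is a red herring, and the factorisation $\mathrm{Aut}^+(B)\to S_n^+\to SO(3)$ is not needed. (Minor slip: $n_k=\Pi_k(n)$, not $\Pi_k(n)^2$; the latter is the multiplicity $m_k=n_k^2$.)

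This direct argument is essentially what \cite[Lemma 4.7]{You18} records, and it applies at the level of any compact quantum group with $SO(3)$ fusion rules --- which is exactly why the paper is content to cite it without further comment.
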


\begin{proof}[Proof of Theorem \ref{K}]
By Lemma \ref{Phi} and the fact that there is the Poisson semigroup $(\mu_t)$ on $SO(3)$ satisfying $\mu_t\sim \sum_{k\geq0} e^{-t\kappa_k^{\frac{1}{2}}}(2k+1)\widetilde{\chi_k}$, we have that 
\[\sum_{k\geq0} e^{-t\kappa_k^{\frac{1}{2}}}e^{-2\omega(k)}(1+k)^2\sim \left\lVert \sum_{k\geq0}e^{-t\kappa_k^{\frac{1}{2}}}e^{-2\omega(k)}(1+2k)\widetilde{\chi_k}\right\rVert_{L^{\infty}(SO(3))}\]
\[\leq\left\lVert \sum_{k\geq0}e^{-t\kappa_k^{\frac{1}{2}}}e^{-2\omega(k)}(1+2k){\chi_k}\right\rVert_{L^{\infty}({\rm Aut}^+(B))}\leq C\left\lVert \sum_{k\geq0}e^{-t\kappa_k^{\frac{1}{2}}}e^{-\omega(k)}(1+2k){\chi_k}\right\rVert_{L^{2}({\rm Aut}^+(B))}\]
\[\leq C^2 ||\widetilde{\mu_t}||_{L^1({\rm Aut}^+(B))}=C^2||\mu_t||_{L^1(SO(3))}=C^2.\]
By taking the limit $t\to 0^+$, we get $\sum_{k\geq0}e^{-2\omega(k)}(1+k)^2\leq KC^2$ for a universal constant $K>0$.
\end{proof}

\begin{remark}
Let $\mathbb{G}$ be a compact matrix quantum group of Kac type. The degree of the rapid decay property for the dual of $\mathbb{G}$ is the infimum of positive numbers $s\geq0$ such that 
\[||f||_{\infty}\lesssim \left(\sum_{\alpha\in {\rm Irr}(\mathbb{G})}(1+|\alpha|)^{2s}n_{\alpha}||\widehat{f}(\alpha)||^2_{HS}\right)^{\frac{1}{2}}\]
for any $f \in \mathcal{O}(\mathbb{G})$.
The rapid decay degree of the dual of ${\rm Aut}^+(B)$ is $\frac{3}{2}$. This can be deduced from the Proposition \ref{infty<2} and Theorem \ref{K} as in \cite[Cororally 5.4]{You20}.
\end{remark}

Consider the semigroup $\{S_t\}$ defined by $S_t\coloneqq e^{-t}T_t$ for a heat semigroup $\{T_t\}$ of ${\rm Aut}^+(B)$.
By showing the ultracontractivity of this semigroup, we obtain the sharp Sobolev embedding property for ${\rm Aut}^+(B)$ as that of $S_n^+$ investigated in \cite{You20}.

\begin{proposition}\label{iff3}
Let $B$ be a finite dimensional C*-algebra with $\dim B\geq5$.
Then there exists a universal constant $K>0$ such that
\[||S_t(f)||_{\infty}\leq\frac{K||f||_2}{t^{\frac{s}{2}}}\;\;\text{for\;all}\;f\in L^2({\rm Aut}^+(B)) \;and\;t>0\]
if and only if $s\geq3$.
\end{proposition}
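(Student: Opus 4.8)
\textbf{Proof proposal for Proposition \ref{iff3}.}

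The plan is to deduce the estimate $\|S_t(f)\|_\infty \le K t^{-s/2}\|f\|_2$ from the spectral form of $S_t$ together with Proposition \ref{infty<2}, and to deduce the sharpness (the ``only if'' direction) from Theorem \ref{K}. Recall that $S_t = e^{-t}T_t$ acts on the eigenspace $V_k$ by the scalar $e^{-t}e^{\lambda_k t} = e^{-(1-\lambda_k)t}$, and by the estimate recalled after Theorem \ref{heat semigroup} we have $1 \le 1-\lambda_k \le 1 + k/(\sqrt n(\sqrt n - 2))$; in particular $1-\lambda_k \asymp (1+k)$ up to constants depending only on $n=\dim B$. So $S_t$ is, up to the fixed dimension-dependent constants, the Fourier multiplier $u^{(k)}_{ij}\mapsto e^{-\omega_t(k)}u^{(k)}_{ij}$ with $\omega_t(k) = (1-\lambda_k)t$, a positive function of $k$.

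For the ``if'' direction, fix $s \ge 3$. Write $f = \sum_k f_k$ with $f_k \in V_k$, so $S_t(f) = \sum_k e^{-\omega_t(k)} f_k = \sum_{\alpha} n_\alpha e^{-\omega_t(|\alpha|)} \mathrm{tr}(\widehat f(\alpha)u^\alpha)$. I would apply Proposition \ref{infty<2} with the weight $\omega = \omega_t$, using that ${\rm Aut}^+(B)$ has the rapid decay property with exponent $\beta = 3/2$ (the theorem of Brannan, $\|x\|_\infty \le D(2k+1)\|x\|_2$). This gives $\|S_t(f)\|_\infty \lesssim \sqrt{C_{\omega_t}}\,\|f\|_2$ with $C_{\omega_t} = \sum_{k\ge0}(1+k)^{3}e^{-2\omega_t(k)}$. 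Now bound $e^{-2\omega_t(k)} = e^{-2(1-\lambda_k)t} \le e^{-2kt/n}$ (using $\lambda_k \le -k/n$) and compare the resulting sum $\sum_k (1+k)^3 e^{-2kt/n}$ to the integral $\int_0^\infty (1+x)^3 e^{-2xt/n}\,dx$, which is $O(t^{-4})$ for small $t$ and $O(e^{-ct})$ for large $t$; in either case $\sqrt{C_{\omega_t}} \lesssim t^{-2} = t^{-s/2}$ for $s=3$, and the full statement follows since $t^{-s/2}\ge t^{-3/2}\cdot(\text{const})$ when $t$ is bounded (and the large-$t$ decay is exponential anyway, so any polynomial bound is automatic). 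I should be slightly careful to phrase the conclusion uniformly in $t > 0$; splitting into $t \le 1$ and $t \ge 1$ handles this cleanly.

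For the ``only if'' direction, suppose the bound $\|S_t(f)\|_\infty \le K t^{-s/2}\|f\|_2$ holds for all $t > 0$. Then $S_t$ is a map $L^2 \to L^\infty$ of the form in Theorem \ref{K} with weight $\omega_t(k) = (1-\lambda_k)t$ and constant $C = Kt^{-s/2}$, so Theorem \ref{K} yields $\sum_{k\ge0}(1+k)^2 e^{-2(1-\lambda_k)t} \le K' \cdot K^2 t^{-s}$ for a universal constant $K'$. Using the lower bound $1-\lambda_k \le 1 + k/(\sqrt n(\sqrt n -2))$, hence $e^{-2(1-\lambda_k)t}\ge e^{-2t}e^{-2kt/(\sqrt n(\sqrt n-2))}$, I would compare the left side from below with $e^{-2t}\int_0^\infty (1+x)^2 e^{-c'xt}\,dx \gtrsim e^{-2t} t^{-3}$ for $t \le 1$; combined with the upper bound $K'K^2 t^{-s}$ this forces $t^{-3} \lesssim t^{-s}$ as $t \to 0^+$, i.e. $s \ge 3$. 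The main obstacle is making the two-sided comparison between the discrete sums $\sum_k (1+k)^a e^{-bkt}$ and the corresponding integrals sharp enough to pin down the exponent exactly — this is a routine but slightly delicate Laplace-type asymptotic, and one has to be careful that the dimension-dependent constants hidden in $\asymp$ do not contaminate the exponent $s$ (they do not, since they only affect the base of the exponential and the multiplicative constant, not the power of $t$).
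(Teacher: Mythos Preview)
Your overall strategy matches the paper's: apply Proposition~\ref{infty<2} with the weight $\omega_t(k)=(1+c_k)t$ for the ``if'' direction, and apply Theorem~\ref{K} for the ``only if'' direction. The paper packages both asymptotic computations into the single reference \cite[Lemma 6.1(2)]{You20}, which asserts directly that $\sup_{t>0} t^s\sum_{k\ge0}(1+k)^2 e^{-2t(1+c_k)}<\infty$ if and only if $s\ge 3$; your explicit integral comparison is a reasonable substitute.

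There is, however, a concrete error in the ``if'' direction. You take the rapid decay exponent to be $\beta=3/2$, but Brannan's bound $\|x\|_\infty\le D(2k+1)\|x\|_2$ gives $r_k\lesssim(1+k)^1$, i.e.\ $\beta=1$. (The number $3/2$ appearing in the Remark after Theorem~\ref{K} is the \emph{rapid decay degree}, a different quantity.) With your choice $\beta=3/2$ the constant in Proposition~\ref{infty<2} becomes $C_{\omega_t}=\sum_k(1+k)^3 e^{-2\omega_t(k)}\asymp t^{-4}$, whence $\sqrt{C_{\omega_t}}\lesssim t^{-2}$, and your line ``$t^{-2}=t^{-s/2}$ for $s=3$'' is false ($t^{-s/2}=t^{-3/2}$ when $s=3$). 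So as written your argument only establishes the bound for $s\ge 4$, not for $s=3$. Once you correct to $\beta=1$, the sum becomes $\sum_k(1+k)^2 e^{-2\omega_t(k)}\asymp t^{-3}$, hence $\sqrt{C_{\omega_t}}\lesssim t^{-3/2}$, and the proof goes through exactly as intended. Your ``only if'' argument via Theorem~\ref{K} is correct and coincides with the paper's.
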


Similar to Theorem \ref{hypercontractivity}, we can follow the case of $S_n^+$ \cite[Cororally 6.2]{You20}. Let us indicate the key steps.

\begin{proof}
The image of heat semigroups can be written as $T_t(u_{ij}^{(k)})=e^{-tc_k}u_{ij}^{(k)}$ where $c_k\sim k$. 

By \cite[Lemma 6.1 (2)]{You20}, $\displaystyle \sup_{0<t<\infty}\left \{ t^s \sum_{k\geq 0}\frac{(1+k)^2}{e^{2t(1+c_k)}} \right\}<\infty$ holds if and only if $s\geq 3$.
Therefore if we assume $s\geq 3$, we have
\[C_{\omega}=\sum_{k\geq 0}\frac{(1+k)^2}{e^{2t(1+c_k)}}\lesssim \frac{1}{t^s}.\]
Proposition \ref{infty<2} is applicable for $\omega:k\mapsto t(1+c_k)$ and we have
\[||S_t(f)||_{\infty}= ||\sum_k \frac{n_k}{e^{t(1+c_k)}}{\rm tr} (\widehat{f}(\alpha)u^{\alpha})||_{\infty}\lesssim \frac{||f||_2}{t^{\frac{s}{2}}}.\]

Conversely, if we have $||S_t(f)||_{\infty}\leq\frac{K||f||_2}{t^{\frac{s}{2}}}$ for all $f\in L^2({\rm Aut}^+(B))$, then we have 
\[\sum_{k\geq 0}\frac{(1+k)^2}{e^{2t(1+c_k)}}\lesssim \frac{1}{t^s}\]
by Theorem \ref{K} and we have $s\geq3$.
\end{proof}

Now we can apply the following theorem \cite[Theorem 1.1]{Xio17} (cf. \cite[Theorem 3.1]{You20})).

\begin{theorem}\label{Xiong}
Let $\mathbb{G}$ be a compact quantum group of Kac type and $\{T_t\}$ be a heat semigroup on $L^{\infty}(\mathbb{G})$ with the generator $T_L$. Then for the semigroup $\{S_t\}\coloneqq \{e^{-t}T_t\}$ and $s>0$, the following are equivalent:\\
{\rm (1)} There exist $p$ and $q$ with $1\leq p<q\leq\infty$ such that
\[||S_t(x)||_{\infty}\lesssim \frac{||x||_p}{t^{s(\frac{1}{p}-\frac{1}{q})}}\;\;\text{for\;all}\;x\in L^p(\mathbb{G})\;and\;t>0.\]
{\rm (2)} For any $1<p<q<\infty$, 
\[||(1-T_L)^{-s(\frac{1}{p}-\frac{1}{q})}(x)||_q\lesssim||x||_p\;\;\text{for\;all}\;x\in L^p(\mathbb{G}).\]

\end{theorem}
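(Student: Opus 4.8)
The plan is to run the classical Varopoulos-type equivalence between ultracontractivity of a symmetric Markov semigroup and Sobolev embeddings for its generator, adapted to the tracial noncommutative $L^p$-spaces $L^p(\mathbb{G})$ (which are the usual ones since $\mathbb{G}$ is of Kac type). The starting observation is that, writing $T_t=\exp(tT_L)$ with $T_L\le 0$, the rescaled semigroup $S_t=e^{-t}T_t$ has generator $-(1-T_L)$, a positive operator with spectrum contained in $[1,\infty)$; hence $(1-T_L)^{-\beta}$ is bounded on $L^2(\mathbb{G})$ for every $\beta>0$ and admits the integral representation
\[(1-T_L)^{-\beta}=\frac{1}{\Gamma(\beta)}\int_0^{\infty}t^{\beta-1}S_t\,dt.\]
Moreover, interpolating the $L^1$- and $L^{\infty}$-contractivity of the Markov semigroup $\{T_t\}$, each $S_t$ is a symmetric contraction on every $L^r(\mathbb{G})$ with $||S_t||_{r\to r}\le e^{-t}$.

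For $(1)\Rightarrow(2)$, I would first use standard semigroup manipulations — the self-duality $||S_t||_{a\to b}=||S_t||_{b'\to a'}$, the semigroup law $S_{2t}=S_tS_t$, and complex interpolation with the $L^r$-contractivity — to pass from the single hypothesis in (1) to the full scale of smoothing estimates $||S_t||_{a\to b}\lesssim t^{-s(1/a-1/b)}$ for $1\le a<b\le\infty$, in particular $||S_t||_{a\to\infty}\lesssim t^{-s/a}$. Given $1<p<q<\infty$ and $\beta=s(1/p-1/q)$, I would then \emph{not} estimate $||(1-T_L)^{-\beta}x||_q$ directly from the representation above (that fails, see below), but instead prove the weak-type endpoint bound: splitting $\int_0^{\infty}=\int_0^{\delta}+\int_{\delta}^{\infty}$, the tail is controlled in $L^{\infty}$ by $\lesssim\delta^{\beta-s/a}||x||_a$ (the exponent is negative because $1/b>0$), while Chebyshev's inequality applied to the head gives a distributional bound of order $\lambda^{-a}\delta^{\beta a}||x||_a^a$; optimising $\delta$ over $\lambda$ yields $||(1-T_L)^{-\beta}\colon L^a(\mathbb{G})\to L^{b,\infty}(\mathbb{G})||\lesssim 1$ whenever $\beta=s(1/a-1/b)$. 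Since $p>1$ and $q<\infty$, the pair $(p,q)$ is an interior point of a segment joining two pairs $(a_0,b_0)$, $(a_1,b_1)$ each satisfying $\beta=s(1/a_i-1/b_i)$, so Marcinkiewicz interpolation in the noncommutative Lorentz scale upgrades the weak-type estimates to the strong-type inequality (2).

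For $(2)\Rightarrow(1)$, I would go back through a Nash inequality. Taking (2) with $p$ close to $1$ and $q$ close to $\infty$ and interpolating with the trivial $L^2$-bound (H\"older), one extracts a Nash-type inequality of the form $||x||_2^{2+2/s}\lesssim\langle(1-T_L)x,x\rangle_{L^2(\mathbb{G})}\,||x||_1^{2/s}$ on $\mathcal{O}(\mathbb{G})$. The classical Nash differential-inequality argument — applied to $u(t)=||S_tx||_2^2$, whose derivative is $-2\langle(1-T_L)S_tx,S_tx\rangle_{L^2(\mathbb{G})}$ — goes through verbatim for a symmetric Markov semigroup on a finite von Neumann algebra and gives $||S_t||_{1\to 2}\lesssim t^{-s/2}$; dualising and composing then yields $||S_t||_{1\to\infty}\lesssim t^{-s}$, which is (1) with $p=1$, $q=\infty$.

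The main obstacle is the endpoint behaviour in $(1)\Rightarrow(2)$: at the Sobolev exponent $\beta=s(1/p-1/q)$ the crude bound $||S_t||_{p\to q}\lesssim t^{-\beta}$ makes $t^{\beta-1}S_t$ non-integrable at $t=0$, so the representation formula cannot be used naively and one is forced through the weak-type estimates and real interpolation — this is precisely why (2) is asserted only for $1<p<q<\infty$ and not at the endpoints. A secondary, routine point is to check that all the analytic tools used (log-convexity of the $L^r(\mathbb{G})$-norms, self-duality of $S_t$, the Bochner integral defining $(1-T_L)^{-\beta}$, Marcinkiewicz interpolation) are legitimate in the tracial noncommutative $L^p$-spaces, which for $\mathbb{G}$ of Kac type is standard.
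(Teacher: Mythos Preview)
The paper does not give its own proof of this theorem: it is quoted verbatim from \cite[Theorem~1.1]{Xio17} (cf.\ \cite[Theorem~3.1]{You20}) and used as a black box to deduce the sharp Hardy--Littlewood--Sobolev inequality for ${\rm Aut}^+(B)$. So there is no argument in the present paper to compare your proposal against.

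That said, your outline is the standard Varopoulos--Coulhon route and is essentially what the cited reference does in the noncommutative setting: extrapolate from a single $L^p\!\to\!L^q$ smoothing estimate to the full scale via duality, the semigroup law and interpolation; then, for $(1)\Rightarrow(2)$, use the subordination formula $(1-T_L)^{-\beta}=\Gamma(\beta)^{-1}\int_0^\infty t^{\beta-1}S_t\,dt$, split the integral, obtain weak-type bounds and upgrade by Marcinkiewicz interpolation (this is exactly why the conclusion in (2) is restricted to $1<p<q<\infty$); for $(2)\Rightarrow(1)$, deduce a Nash-type inequality and run the differential inequality for $t\mapsto\|S_t x\|_2^2$. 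All the tools you invoke (self-adjointness and $L^r$-contractivity of $S_t$, noncommutative Lorentz spaces and Marcinkiewicz interpolation) are available for tracial noncommutative $L^p$-spaces, which covers the Kac case.

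One small remark on the statement as printed here: condition~(1) has $\|S_t(x)\|_\infty$ on the left-hand side, but the exponent $s(\tfrac1p-\tfrac1q)$ and the subsequent application with $p=2$, $q=\infty$ make clear that the intended left-hand side is $\|S_t(x)\|_q$. Your reading is consistent with that, and your extrapolation step implicitly corrects for it.
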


Now we have the sharpness of the Hardy-Littlewood-Sobolev inequality for ${\rm Aut}^+(B)$.
Proposition \ref{iff3} implies that Theorem \ref{Xiong} (1) is satisfied for $s\geq3,\;p=2,\;q=\infty$.
Therefore we obtain the following inequality from Theorem \ref{Xiong} (2) for $s=3,\;q=2$  (cf. \cite[Example 4 (2)]{You20}).

 \[\left(\sum_{k\geq0}\frac{n_k}{(1+k)^{3(\frac{2}{p}-1)}}||\widehat{f}(k)||_{HS}^2\right)^{\frac{1}{2}}\lesssim||f||_p\]
 for any $p\in(1,2]$ and $f\in L^p({\rm Aut}^+(B))$ with $\dim B\geq5$.

\subsection{The sharp Hausdorff-Young inequality}
  The Hausdorff-Young inequalities for general compact quantum groups state that the Fourier transform $\mathcal{F}:L^p(\mathbb{G})\to\ell^{p'}(\widehat{\mathbb{G}})$ is contractive for any $1\leq p\leq2$ where $1<p'$ is the conjugate index of $p$. 

We have the Hausdorff-Young inequality for ${\rm Aut}^+(B)$ by \cite[Theorem 4.1]{You20} as follows.

\begin{theorem}
Let $B$ be a finite dimensional C*-algebra with $\dim B\geq5$.
Then for any $p\in (1, 2]$, we have
\[\left(\sum_{k\geq0}\frac{1}{(1+k)^{ (p'-2)}}\left(\sum_k n_k|\widehat{f}(\alpha)||^2_{HS}\right)^{\frac{p'}{2}}\right)^{\frac{1}{p'}}\lesssim||f||_p\]
for any $f\in L^p({\rm Aut}^+(B))$.
\end{theorem}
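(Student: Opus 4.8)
The plan is to mimic the proof of the sharp Hardy–Littlewood–Sobolev inequality in the previous subsection, transplanting the argument of \cite[Theorem 4.1]{You20} (and \cite[Section 6]{You20}) to ${\rm Aut}^+(B)$ via the tools already assembled. The starting point is the ultracontractivity estimate for the rescaled semigroup $\{S_t\}=\{e^{-t}T_t\}$ obtained in Proposition \ref{iff3}: namely $\|S_t(f)\|_\infty\lesssim t^{-s/2}\|f\|_2$ holds exactly when $s\ge 3$. Interpolating this with the trivial bound $\|S_t\|_{2\to 2}\le 1$ and with the $L^1\to L^\infty$ behaviour (or more directly applying Theorem \ref{Xiong} with the pair $(p,q)$ and $s=3$), one passes to a bound of the form $\|(1-T_L)^{-3(\frac1p-\frac1{p'})/2}\|_{p\to p'}\lesssim 1$ for $1<p\le 2$. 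The second ingredient is the Hausdorff–Young inequality for ${\rm Aut}^+(B)$ stated just above, $\mathcal F:L^p({\rm Aut}^+(B))\to\ell^{p'}(\widehat{{\rm Aut}^+(B)})$ contractive; combining the two is exactly the scheme by which \cite{You20} upgrades the plain Hausdorff–Young inequality to the weighted one, and the constant $p'-2$ in the weight is the image of the rapid-decay exponent $3/2$ under the relevant exponent bookkeeping.

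Concretely, I would argue as follows. Fix $p\in(1,2]$ and write $f\in L^p({\rm Aut}^+(B))$. Applying the Hausdorff–Young inequality to the smoothed function $(1-T_L)^{-\theta}f$, where $\theta=\tfrac32\bigl(\tfrac1p-\tfrac1{p'}\bigr)=\tfrac32\cdot\tfrac{p'-2}{p'}$ is chosen so that $2\theta\cdot\tfrac{p'}{2}$ produces the weight exponent $3\bigl(\tfrac1p-\tfrac1{p'}\bigr)\cdot\tfrac{p'}{2}$, and using that $T_L$ acts on $V_k$ as multiplication by $-c_k\sim -k$ so that $(1-T_L)^{-\theta}$ multiplies the $k$-th Fourier block by $(1+c_k)^{-\theta}\asymp (1+k)^{-\theta}$, one gets
\[
\left(\sum_{k\ge 0}\bigl(1+c_k\bigr)^{-\theta p'}\bigl(n_k\|\widehat f(k)\|_{HS}^2\bigr)^{p'/2}\right)^{1/p'}
=\bigl\|\mathcal F\bigl((1-T_L)^{-\theta}f\bigr)\bigr\|_{\ell^{p'}}
\le \bigl\|(1-T_L)^{-\theta}f\bigr\|_p .
\]
Then invoke the Sobolev-type bound $\|(1-T_L)^{-\theta}f\|_p\lesssim \|f\|_p$, which is the content of Theorem \ref{Xiong}(2) applied with $s=3$ and the pair $(p,p)$ — or, if one prefers the form literally stated there (which needs $p<q$), one runs the argument with $(1-T_L)^{-\theta}:L^p\to L^{p}$ recovered from the $L^p\to L^q$ estimates by the semigroup's contractivity on $L^\infty$ and $L^1$ together with interpolation. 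Since $\theta p'=3\bigl(\tfrac1p-\tfrac1{p'}\bigr)\cdot\tfrac{p'}{1}\cdot\tfrac{1}{2}\cdot 2=\dots$ — matching the bookkeeping, $\theta p' = \frac{3}{2}\cdot\frac{p'-2}{p'}\cdot p' = \frac{3(p'-2)}{2}$, and one checks this against the exponent $s=p'-2$ in the claimed inequality — the two coincide precisely (the factor $2$ from $\|\cdot\|_{HS}^2$ being raised to the $p'/2$ power cancels the $2$ in the denominator), which is where the sharp value $s=p'-2$ emerges.

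I expect the main obstacle to be the exponent bookkeeping in the last paragraph: verifying that the power of $(1+k)$ coming out of $(1-T_L)^{-\theta}$ after taking the $\ell^{p'}$ norm is exactly $(1+k)^{-(p'-2)}$ inside the sum with the $n_k\|\widehat f(k)\|_{HS}^2$ term raised to the $p'/2$, rather than some nearby but wrong exponent, and — for the \emph{sharpness} half of the statement — reversing the inequality: one must show that if the weighted Hausdorff–Young inequality held with $s<p'-2$ then, testing on characters $\chi_k$ (whose $L^p$-norm is controlled via Lemma \ref{Phi} by the corresponding computation on $SO(3)$ and whose Fourier data is explicit), one would contradict the sharp rapid-decay degree $\tfrac32$ established in the remark after Theorem \ref{K}. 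The lower-bound direction is really the crux: it needs the transfer Lemma \ref{Phi} to reduce the required estimates on $\|\chi_k\|_{L^p({\rm Aut}^+(B))}$ to the classical group $SO(3)$, exactly as in \cite[Section 6]{You20}, and then an optimization over $k$ to see that $s=p'-2$ is not improvable. Everything else — the interpolation, the action of $(1-T_L)^{-\theta}$ on eigenspaces, the invocation of Theorem \ref{Xiong} — is routine once the $S_n^+$ proof is in hand, since by the monoidal equivalence ${\rm Aut}^+(B)\cong_{\text{mon}} S_n^+$ the representation-theoretic combinatorics are literally identical.
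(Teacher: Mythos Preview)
Your proposal has a genuine gap, and it also mixes up which theorem is being proved.

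First, the statement at hand is only the \emph{forward} weighted Hausdorff--Young inequality; the ``only if'' direction is the \emph{next} theorem in the paper. In the paper this forward inequality is not argued at all: it is simply quoted as a special case of \cite[Theorem 4.1]{You20}, a general result that applies once the rapid decay property is in place. So there is nothing to compare your sharpness discussion to here.

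Second, and more seriously, your route through the unweighted Hausdorff--Young inequality cannot reach the target. Applying $\mathcal F:L^p\to\ell^{p'}(\widehat{\mathbb G})$ to $(1-T_L)^{-\theta}f$ produces, on the Fourier side, the norm
\[
\Bigl(\sum_{k\ge 0} n_k\,{\rm tr}\bigl(|(1+c_k)^{-\theta}\widehat f(k)|^{p'}\bigr)\Bigr)^{1/p'}
= \Bigl(\sum_{k\ge 0} (1+c_k)^{-\theta p'}\,n_k\,\|\widehat f(k)\|_{S^{p'}}^{p'}\Bigr)^{1/p'},
\]
with Schatten-$p'$ norms and dimension weight $n_k$. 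The quantity you must bound is
\[
\Bigl(\sum_{k\ge 0} (1+k)^{-(p'-2)}\,n_k^{p'/2}\,\|\widehat f(k)\|_{HS}^{p'}\Bigr)^{1/p'}.
\]
For $p'>2$ one has $\|A\|_{HS}\ge\|A\|_{S^{p'}}$ and, more to the point, the dimension weights differ by the factor $n_k^{p'/2-1}$, which for ${\rm Aut}^+(B)$ with $\dim B\ge 5$ grows \emph{exponentially} in $k$. No polynomial weight $(1+k)^{-s}$ can absorb this, so the first display does not dominate the second. Your own bookkeeping already flags the discrepancy: you compute $\theta p'=\tfrac{3}{2}(p'-2)$, which is not $p'-2$, and the attempted explanation (``the factor $2$ \ldots cancels the $2$ in the denominator'') does not correspond to any actual cancellation.

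The argument behind \cite[Theorem 4.1]{You20} does not factor through the scalar $\ell^{p'}$ Hausdorff--Young inequality at all; it is an interpolation between the endpoints $p=1$ and $p=2$ in the vector-valued $\ell^p$-scale built on the blocks $(V_k,\|\cdot\|_{HS})$ (this is exactly the framework recalled in the preliminaries on complex interpolation), using rapid decay at the $p=1$ end. If you want to supply a proof rather than cite, that is the scheme to follow; the smoothing operator $(1-T_L)^{-\theta}$ plays no role.
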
 

We also have the sharpness of the Hausdorff-Young inequality. This can be shown in the same way as for $S_n^+$ \cite[Corollary 6.3]{You20}.

\begin{theorem}
 Let $1<p\leq2$ and $B$ be a finite dimensional C*-algebra with $\dim B\geq5$.
 \[\left(\sum_{k\geq0}\frac{1}{(1+k)^s}\left(n_k||\widehat{f}(k)||^2_{HS}\right)^{\frac{p'}{2}}\right)^{\frac{1}{p'}}\lesssim ||f||_p\]
 for any $f\in L^p({\rm Aut}^+(B))$ if and only if $s\geq p'-2$.
 \end{theorem}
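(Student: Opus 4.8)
The plan is to prove the two directions separately, following the blueprint of You \cite{You20} for $S_n^+$, and to reduce everything to the already-established machinery for ${\rm Aut}^+(B)$.

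\medskip

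\emph{Sufficiency ($s\geq p'-2$).} First I would observe that it suffices to treat the endpoint $s=p'-2$, since increasing $s$ only shrinks the left-hand side (the weights $(1+k)^{-s}$ decrease), so the inequality for $s=p'-2$ implies it for all larger $s$. For the endpoint itself, this is exactly the Hausdorff--Young inequality for ${\rm Aut}^+(B)$ stated just above (the theorem attributed to \cite[Theorem 4.1]{You20}): unwinding the notation there, $\bigl(\sum_k n_k\|\widehat f(k)\|_{HS}^2\bigr)^{p'/2}$ with the weight $(1+k)^{-(p'-2)}$ is precisely the $\ell^{p'}(\widehat{\mathbb{G}})$-type expression appearing in our statement with $s=p'-2$. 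So the sufficiency direction is essentially a restatement; I would just spell out the identification of the two sums and note the monotonicity in $s$.

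\medskip

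\emph{Necessity ($s\geq p'-2$ is forced).} This is the substantive half. The strategy is to test the inequality on a well-chosen family of functions, namely the (normalized) characters $\chi_k$ of the irreducible representations $U_k$, or rather truncated Poisson/heat-type sums built from them, and to extract a lower bound on $s$ by letting a parameter degenerate. Concretely, I would take $f_t \sim \sum_{k\geq 0} e^{-t\kappa_k^{1/2}}(2k+1)\chi_k$ (the Poisson semigroup element transplanted from $SO(3)$ via Lemma \ref{Phi}), for which $\|f_t\|_1 = 1$ by the same computation as in the proof of Theorem \ref{K}, and more generally control $\|f_t\|_p$. On the Fourier side, $\widehat{f_t}(k)$ is a scalar multiple of the identity on the $n_k$-dimensional space, so $n_k\|\widehat{f_t}(k)\|_{HS}^2$ is explicitly computable, and the left-hand side of the claimed inequality becomes $\bigl(\sum_k (1+k)^{-s} (\text{explicit})^{p'/2}\bigr)^{1/p'}$. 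Comparing the growth of the two sides as $t\to 0^+$ — using the asymptotics $c_k\sim k$, $n_k = \Pi_k(n)^2 \sim k^2$ (up to constants), and Lemma \ref{Phi} to pass freely between ${\rm Aut}^+(B)$ and $SO(3)$ norms — forces a constraint on $s$; optimizing the test family (or using the sharp version of \cite[Lemma 6.1]{You20} as invoked in Proposition \ref{iff3}) pins it down to $s\geq p'-2$. I would also mention the alternative, cleaner route: interpolating the $p=2$ endpoint (Plancherel, where $s=0$ works and is sharp) against the $p\to 1^+$ behavior, exactly as in \cite[Corollary 6.3]{You20}, since $p'-2$ is the linear interpolation of $0$ (at $p=2$) and the $p=1$ value; the sharpness at $p=2$ is elementary and the sharpness at the endpoint $p'=\infty$ reduces to the rapid-decay-degree computation $\frac{3}{2}$ already recorded in the Remark after Theorem \ref{K}.

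\medskip

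The main obstacle I anticipate is the necessity direction's bookkeeping: one must verify that the constants hidden in $c_k\sim k$ and $n_k\sim k^2$ do not interfere with the \emph{sharp} exponent — i.e. that the test functions genuinely saturate the inequality and not merely up to a logarithmic or polynomial loss. This is where the monoidal-equivalence/transplantation Lemma \ref{Phi} is crucial, because it lets us compute the relevant $L^p$-norms in $L^p(SO(3))$ where everything is classical and the character asymptotics are exact; the remaining work is then the real-variable estimate on $\sum_k (1+k)^{a} e^{-bkt}$-type sums as $t\to 0^+$, which is routine once set up. I would therefore structure the proof so that the quantum input (Lemma \ref{Phi}, the heat semigroup formula, the rapid decay estimate) is front-loaded and the tail of the argument is a transparent scaling computation mirroring \cite[Corollary 6.3]{You20}.
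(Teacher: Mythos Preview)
Your handling of sufficiency matches the paper's (it is indeed immediate from the preceding theorem plus monotonicity in $s$). For necessity, however, the paper takes a shorter and structurally different route than the test-function method you describe. Rather than plugging in Poisson-type sums and extracting $t\to 0^+$ asymptotics, the paper keeps $p$ fixed and interpolates the \emph{target} space: it combines the Paley-type bound $\mathcal{F}\colon L^p\to\ell^p(\{V_k\},\mu_0)$ with $\mu_0(k)=(1+k)^{2p-4}$ (from \cite[Corollary~3.9]{You18}) and the assumed bound $\mathcal{F}\colon L^p\to\ell^{p'}(\{V_k\},\mu_1)$ with $\mu_1(k)=(1+k)^{-s}$, and complex-interpolates (at $\theta=\tfrac12$) to get $\mathcal{F}\colon L^p\to\ell^2(\{V_k\},\mu)$ with $\mu(k)=(1+k)^{2-4/p-s/p'}$. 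This is precisely a Hardy--Littlewood--Sobolev inequality, and the sharp exponent already established there (Section~4.1) forces $-2+\tfrac{4}{p}+\tfrac{s}{p'}\geq 3(\tfrac{2}{p}-1)$, i.e.\ $s\geq p'-2$.

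Your Poisson-kernel approach would also succeed once one pins down $\|\mu_t\|_{L^p(SO(3))}\sim t^{-3/p'}$ and the elementary asymptotics of $\sum_k(1+k)^{p'-s}e^{-cp'tk}$; it has the virtue of being self-contained, while the paper's argument is quicker because it recycles the sharp Sobolev result in a single interpolation step (at the price of importing the Paley inequality from \cite{You18}). Your ``alternative'' of interpolating in $p$ between Plancherel and the $p\to 1^+$ endpoint is \emph{not} what the paper does and would not directly yield necessity: sharpness at two endpoints does not by itself imply sharpness at intermediate $p$.
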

 
 \begin{proof}
 It is enough to prove the ``only if" part.
 By \cite[Corollary 3.9]{You18}, we have
 \[\left(\sum_{k\geq0}\frac{1}{(1+k)^{4-2p}}\left(n_k ||\widehat{f}(k)||_{HS}^2\right)^{\frac{p}{2}}\right)^{\frac{1}{p}}\lesssim ||f||_p.\]
 Therefore 
 \[||\mathcal{F}||_{L^p({\rm Aut}^+(B))\rightarrow \ell^p(\left \{V_k\right\}_{k\geq 0},\mu_0)},~||\mathcal{F}||_{L^p({\rm Aut}^+(B))\rightarrow \ell^{p'}(\left \{V_k\right\}_{k\geq 0},\mu_1)}<\infty \] 
where $\mu_0(k)=(1+k)^{2p-4}$ and $\mu_1(k)=(1+k)^{-s}$. By considering complex interpolation, we have 
\[||\mathcal{F}||_{L^p({\rm Aut}^+(B))\rightarrow \ell^{2}(\left \{V_k\right\}_{k\geq 0},\mu)}<\infty~\mathrm{with}~\mu(k)=(1+k)^{2-\frac{4}{p}-\frac{s}{p'}}\] and the consequence of the previous section implies that $\displaystyle -2+\frac{4}{p}+\frac{s}{p'}\geq 3(\frac{2}{p}-1)$ which is equivalent to $s\geq p'-2$.
 \end{proof}

 \section{Appendix: another proof for the formula of heat semigroups on ${\rm Aut}^+(B)$}
 
The author thanks M. Yamashita for the ideas in this appendix.
 
We are going to prove that ad-invariant L\'{e}vy processes on ${\rm Aut}^+(B)$ has the same form as those of $S_n^+$ for $n=\dim(B)$.

\begin{theorem}
The ad-invariant generating functionals on $\mathcal{O}({\rm Aut}^+(B))$ with $n=\dim(B)$ are of the form
\[
\hat{L}=L\circ \widetilde{{\rm ad}}_h
\]
with $L$ defined on $C^1([0, n])\subset C^*(\chi_k : k\in \mathbb{N})\cong C([0,n])$ by
\[
Lf = -a f'(n) + \int_{0}^n \frac{f(x) - f(n)}{n-x} {\rm d}\nu(x)
\]
where $a>0$ is a real number and $\nu$ is a finite measure on $[0,n]$ satisfying $\nu(\{n\})=0$. Furthermore, $a$ and $\nu$ are uniquely determined by $L$.

\end{theorem}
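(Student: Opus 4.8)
The plan is to cut the problem down to the commutative ``character algebra'' $C^*(\chi_k:k\in\mathbb N)$ and then solve a L\'evy--Khinchine problem on the interval $[0,n]$.

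\textbf{Reduction to characters.} For a compact quantum group of Kac type, ad-invariance of a functional $\hat L$ on $\mathcal O(\mathbb G)$ means $\hat L\circ\widetilde{{\rm ad}}_h=\hat L$, where $\widetilde{{\rm ad}}_h\colon\mathcal O(\mathbb G)\to{\rm span}\{\chi_k:k\in\mathbb N\}$ is the projection onto the fixed-point space of the adjoint coaction obtained by averaging against the Haar state; this is the central-versus-spherical correspondence recorded in \cite{CFK}, \cite{DCFY}. Hence $\hat L=L\circ\widetilde{{\rm ad}}_h$ with $L:=\hat L|_{{\rm span}\{\chi_k\}}$, and $\hat L$ is a generating functional exactly when $L(1)=0$, $L$ is hermitian, and $L\circ\widetilde{{\rm ad}}_h$ is conditionally positive on $\mathcal O(\mathbb G)$. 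By the monoidal equivalence between ${\rm Aut}^+(B)$ and $S_n^+$ one has $C^*(\chi_k:k\in\mathbb N)\cong C([0,n])$ as in the introduction, and under this identification $\chi_k$ becomes the polynomial $\Pi_k$ and the counit $\varepsilon$ becomes evaluation $f\mapsto f(n)$ at the endpoint, since $\varepsilon(\chi_k)=\dim U_k=\Pi_k(n)$. On this commutative subalgebra $\widetilde{{\rm ad}}_h$ is the identity, so conditional positivity of $\hat L$ there is just $L(\overline f f)\ge0$ whenever $f(n)=0$; the extension of $L$ to $C^1([0,n])$ is part of the generating-functional data and is continuous for the $C^1$-norm. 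The finer part of conditional positivity --- on all of $\mathcal O({\rm Aut}^+(B))$, equivalently spherical positivity on ${\rm Tub}({\rm Rep}({\rm Aut}^+(B)))$, which by \cite{NY18} is Morita equivalent to ${\rm Tub}({\rm Aut}^+(B))$ and, being a monoidal invariant, matches the one for $S_n^+$ --- enters only to fix the sign of $a$ (and, via $S_n^+$ in \cite{FKS}, to guarantee finiteness of $\nu$; see below).

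\textbf{The decomposition.} Since $(n-x)^2$ is the exact vanishing of $|f|^2$ at $n$ for $f\in C^1$ with $f(n)=0$, I would set $\Lambda(p):=L\big((n-x)^2p\big)$ for $p\in C^1([0,n])$. For $p\ge0$ one has $\Lambda(p)\ge0$: approximate $p$ by $p+\varepsilon$ and factor $(n-x)^2(p+\varepsilon)=|g|^2$ with $g=(n-x)\sqrt{p+\varepsilon}\in C^1$, $g(n)=0$, so $\Lambda(p+\varepsilon)=L(|g|^2)\ge0$, and let $\varepsilon\to0$ using $C^1$-continuity of $L$. Thus $\Lambda$ is bounded for the sup-norm, extends to $C([0,n])$, and Riesz gives a finite positive Borel measure $\mu$ with $\Lambda(p)=\int_{[0,n]}p\,d\mu$. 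Bump functions $q_\varepsilon$ concentrating at $n$ satisfy $(n-x)^2q_\varepsilon\to0$ in $C^1$, whence $\mu(\{n\})=\lim_\varepsilon\Lambda(q_\varepsilon)=0$. Define $\nu$ on $[0,n)$ by $d\nu:=(n-x)^{-1}d\mu$, with $\nu(\{n\})=0$; assuming $\nu$ is finite, put $L_2f:=\int_0^n\frac{f(x)-f(n)}{n-x}\,d\nu(x)$, a generating functional of the asserted type (the integrand is bounded continuous, and $L_2(|f|^2)=\int|f|^2/(n-x)\,d\nu\ge0$ when $f(n)=0$). Then $L_2\big((n-x)^2p\big)=\int(n-x)p\,d\nu=\int p\,d\mu=\Lambda(p)=L\big((n-x)^2p\big)$, so $M:=L-L_2$ annihilates every $(n-x)^2p$, $p\in C^1$; as these functions are $C^1$-dense in $\{g\in C^1:g(n)=g'(n)=0\}$ and $M$ is $C^1$-continuous, $M$ annihilates all such $g$. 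Writing $f=f(n)\cdot 1+f'(n)(x-n)+(f-f(n)-f'(n)(x-n))$ and using $M(1)=0$, we get $Mf=-a\,f'(n)$ with $a:=-M(x-n)$, i.e.\ $Lf=-af'(n)+\int_0^n\frac{f(x)-f(n)}{n-x}d\nu(x)$. That $a\ge0$ (with $a>0$ for non-degenerate $\hat L$) follows by testing conditional positivity of $\hat L$ against the $u_{ij}^{(k)}$, exactly as for $S_n^+$.

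\textbf{Uniqueness.} Given $L$ in this form, evaluating it on $C^1$ functions that vanish identically near $n$ recovers $\int f\,d\nu$ for all such $f$, hence $\nu$ on each $[0,n-\delta]$, hence $\nu$ on $[0,n)$; together with $\nu(\{n\})=0$ this determines $\nu$, and then $a$ is read off from $Lf-\int_0^n\frac{f(x)-f(n)}{n-x}d\nu(x)=-af'(n)$ for any $f$ with $f'(n)\ne0$. The normalization $\nu(\{n\})=0$ is precisely what removes the ambiguity, since an atom of $\nu$ at $n$ contributes $-\nu(\{n\})f'(n)$ and is indistinguishable from the $a$-term.

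\textbf{Main obstacle.} I expect the delicate point to be the finiteness of $\nu$: the Riesz measure $\mu$ is automatically finite, but $d\nu=(n-x)^{-1}d\mu$ has a potential endpoint singularity, and excluding $\int(n-x)^{-1}d\mu=\infty$ is the statement that, for this fusion category, a conditionally positive $L$ with $C^1$-domain must have a \emph{finite} L\'evy measure. This should come from full conditional positivity on $\mathcal O({\rm Aut}^+(B))$ rather than on the characters alone --- equivalently from the spherical-positivity picture on ${\rm Tub}({\rm Rep}({\rm Aut}^+(B)))$ and its Morita equivalence with ${\rm Tub}({\rm Aut}^+(B))$ from \cite{NY18} --- which is also the mechanism by which, via the monoidal equivalence with $S_n^+$, the whole statement is transported from the known case of quantum permutation groups.
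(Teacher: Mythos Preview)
Your reduction step and the paper's argument share the same backbone: both invoke the monoidal equivalence ${\rm Aut}^+(B)\sim_\otimes S_n^+$, the Morita equivalence ${\rm Tub}({\rm Rep}(\mathbb G))\subset{\rm Tub}(\mathbb G)$ from \cite{NY18}, and the central/spherical correspondence of \cite{DCFY} to identify $C^*(\chi_k)\cong C([0,n])$ and to match central functionals on ${\rm Aut}^+(B)$ with those on $S_n^+$. The divergence is in what happens next. The paper stops here: once the bijection of central states---and hence of central generating functionals---between ${\rm Aut}^+(B)$ and $S_n^+$ is established, the entire classification formula is transported wholesale from the known $S_n^+$ case in \cite{FKS}/\cite{BGJ2}, with no further analysis on $[0,n]$. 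You instead attempt a direct L\'evy--Khinchine decomposition on the interval via the Riesz construction $\Lambda(p)=L((n-x)^2p)$.

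That direct route is appealing but carries a genuine gap. You assert that ``the extension of $L$ to $C^1([0,n])$ is part of the generating-functional data and is continuous for the $C^1$-norm,'' yet this is exactly the conclusion of the theorem, not a hypothesis: a priori $\hat L$ lives only on $\mathcal O({\rm Aut}^+(B))$, so $L$ is merely a linear functional on the polynomial span of the $\chi_k$, with no topology attached. Your argument leans on $C^1$-continuity repeatedly---in the positivity of $\Lambda$ (via $C^1$ square roots), in the bump-function computation of $\mu(\{n\})$, and in the density step showing $M$ kills $\{g:g(n)=g'(n)=0\}$. You also honestly flag the finiteness of $\nu$ as unresolved and concede it must be imported from $S_n^+$ anyway. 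Given that the Morita/tube-algebra correspondence you already set up transports \emph{all} central generating functionals bijectively, the paper's approach is more economical: the $C^1$-extension, the finiteness of $\nu$, and the sign of $a$ all come over from $S_n^+$ in one stroke, and there is nothing to gain from redoing the interval analysis by hand.
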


The key to the proof of the theorem is the following proposition:

 \begin{proposition}
 For ${\rm Aut}^+(B)$ with $n=\dim(B)$, 
 \[C^*(\chi_k : k\in \mathbb{N})\cong C([0,n]).\]
 \end{proposition}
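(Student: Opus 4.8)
The plan is to identify the C*-algebra generated by the characters $\chi_k$ of the irreducible representations of ${\rm Aut}^+(B)$ inside $C_r({\rm Aut}^+(B))$ by first understanding its abstract structure from the fusion rules, and then pinning down the concrete interval $[0,n]$ using the value of the counit on $\chi_1$. The starting point is the $SO(3)$-type fusion rule (Theorem on fusion rules): the characters satisfy $\chi_1 \chi_k = \chi_{k-1} + \chi_k + \chi_{k+1}$ with $\chi_0 = 1$, so that $\chi_k = \Pi_k(\chi_1)$ for the polynomials $\Pi_k = S_{2k}(\sqrt{\cdot})$ introduced in Theorem \ref{heat semigroup}. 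In particular the unital $*$-algebra generated by $\{\chi_k : k \in \mathbb{N}\}$ is the polynomial algebra $\mathbb{C}[\chi_1]$ in the single self-adjoint element $\chi_1$, so $C^*(\chi_k : k\in\mathbb{N}) = C^*(\chi_1)$ is a commutative C*-algebra, hence $\cong C(\sigma(\chi_1))$ where $\sigma(\chi_1)$ is the spectrum of $\chi_1$ in $C_r({\rm Aut}^+(B))$.

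Next I would determine the spectrum $\sigma(\chi_1)$. One clean route is to transport the computation to $S_n^+$ via the monoidal equivalence (the Proposition stating ${\rm Aut}^+(B) \sim_{\text{mon}} S_n^+$ for $n = \dim B$): monoidal equivalence gives a bijection of irreducible representations preserving fusion rules and dimensions, and it is known (and used implicitly via Lemma \ref{Phi}) that it induces an isometric isomorphism of the C*-algebras generated by characters, so it suffices to show $C^*(\chi_1^{S_n^+}) \cong C([0,n])$. For $S_n^+$ the character $\chi_1$ of the fundamental representation minus $1$ — equivalently $\chi_1 = \sum_i u_{ii} - 1$ in the magic-unitary picture — has a known spectral description: $\chi_1 + 1$ has spectrum $[0,n]$ after the shift normalization matching $\Pi_k$, so that $\sigma(\chi_1) = [0, n]$. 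Concretely, since $\Pi_k(n) = S_{2k}(\sqrt n) > 0$ equals the dimension $n_k$ of $U_k$ and $\varepsilon(\chi_k) = n_k$, the counit $\varepsilon$ evaluated on $\chi_1$ gives the right endpoint, and positivity of the Haar-state inner products together with the three-term recursion (which exhibits $\chi_1$ as a Jacobi-type operator whose orthogonal polynomials are the $\Pi_k$) forces the spectrum to be exactly the interval $[0,n]$ — the measure of orthogonality is the spectral measure of $\chi_1$ with respect to the Haar state, namely the (rescaled) free Poisson / Marchenko–Pastur law, whose support is $[0,n]$ for $n \geq 4$.

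The key steps in order: (1) use the $SO(3)$ fusion rule to write $\chi_k = \Pi_k(\chi_1)$ and conclude $C^*(\chi_k : k) = C^*(\chi_1)$ is commutative, hence $\cong C(\sigma(\chi_1))$; (2) reduce to $S_n^+$ via monoidal equivalence, noting the induced isometry on character algebras (this is essentially Lemma \ref{Phi} together with its analogue matching ${\rm Aut}^+(B)$ to $S_n^+$, via the common relation to $SO(3)$); (3) identify the spectral distribution of $\chi_1$ with respect to the Haar state as the free Poisson law of support $[0,n]$ — this is classical for $S_n^+$ (the character of the fundamental representation of $S_n^+$ follows a free Poisson distribution), and since the Haar state is faithful on $C_r$, the $L^2$-support equals the norm-spectrum $\sigma(\chi_1) = [0,n]$; (4) assemble: $C^*(\chi_k : k \in \mathbb{N}) \cong C([0,n])$, with $\chi_k$ corresponding to the function $\Pi_k$ on $[0,n]$.

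The main obstacle I expect is step (3): rigorously pinning down that the spectrum of $\chi_1$ in the reduced C*-algebra is the \emph{entire} closed interval $[0,n]$ and not a proper subset. The upper and lower bounds on the endpoints follow from $\varepsilon(\chi_1)$ and positivity, but the fact that the spectrum fills the whole interval requires either citing the known free-Poisson computation for the fundamental character of $S_n^+$ (equivalently, the Marchenko–Pastur description stemming from the fact that $\mathbb{C}[\chi_1] \hookrightarrow L^2$ is faithful and the orthogonal polynomials $\Pi_k$ have orthogonality measure supported exactly on $[0,n]$), or an explicit appeal to the representation-theoretic structure. A secondary technical point is justifying that the monoidal equivalence genuinely yields an \emph{isometric} $*$-isomorphism between $C^*(\chi_k^{{\rm Aut}^+(B)})$ and $C^*(\chi_k^{S_n^+})$ at the C*-level (not merely at the level of the Hopf $*$-algebra of matrix coefficients); this is where one invokes the Morita-equivalence/Drinfeld-double machinery referenced in the introduction, or more directly the norm-preservation in Lemma \ref{Phi}.
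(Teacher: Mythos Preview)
Your overall plan --- reduce to the single self-adjoint generator $\chi_U = 1 + \chi_1$, identify $C^*(\chi_k : k) = C^*(\chi_U) \cong C(\sigma(\chi_U))$, and then compute the spectrum --- is the right shape, but step (3) contains a genuine error that breaks the argument. The Haar-state spectral measure of $\chi_U$ is \emph{not} supported on $[0,n]$. The moments $h(\chi_U^k)$ equal the multiplicity of the trivial representation in $U^{\otimes k}$, and since the fusion rules of ${\rm Aut}^+(B)$ are those of $SO(3)$ for \emph{every} $n = \dim B \geq 4$, these moments are independent of $n$; they are the Catalan numbers, i.e.\ the moments of the free Poisson law of parameter $1$, whose support is $[0,4]$. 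Thus your argument ``the Haar state is faithful on $C_r$, hence the $L^2$-support equals the norm-spectrum'' correctly yields $\sigma(\chi_U) = [0,4]$ in the \emph{reduced} C*-algebra --- but the proposition (as the introduction makes explicit) is about the \emph{universal} completion, where one wants $\sigma(\chi_U) = [0,n]$. For $n \geq 5$ the Haar state is not faithful on $C_u$, so the spectral-measure argument cannot reach past $4$; the counit does witness $n \in \sigma(\chi_U)$, but filling in the whole interval $(4,n)$ requires exhibiting a central state $\phi_t$ with $\phi_t(\chi_U) = t$ for each such $t$, and that is precisely the nontrivial content. Your appeal to Lemma~\ref{Phi} does not help either: that lemma compares $L^p$-norms with $SO(3)$, hence again only sees the $[0,4]$ picture.

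The paper's route is designed to supply exactly those missing states. Rather than computing a single spectral measure, it sets up a bijection between \emph{all} central states on $\mathcal{O}({\rm Aut}^+(B))$ and on $\mathcal{O}(S_n^+)$ at the universal level, by passing through the Drinfeld double: central states correspond to ``spherical'' positive functionals on $\mathcal{O}_c(\widehat{\mathcal{D}}(\mathbb{G})) \cong {\rm Tub}(\mathbb{G})$, the monoidal equivalence gives ${\rm Tub}({\rm Rep}({\rm Aut}^+(B))) \cong {\rm Tub}({\rm Rep}(S_n^+))$, and the strong Morita equivalence ${\rm Tub}({\rm Rep}(\mathbb{G})) \sim {\rm Tub}(\mathbb{G})$ transports the full family of such functionals, matching the trivial-representation projections on both sides. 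One then invokes the known identification $C^*(\chi_k) \cong C([0,n])$ for $S_n^+$. In short, the paper's machinery is not decoration: it is what bridges the gap between the $[0,4]$ you can see from the Haar state and the $[0,n]$ that the universal picture actually requires.
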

 
 This is obtained by considering the slice maps of central functions on ${\rm Aut}^+(B)$ in \cite[Theorem 3.14]{BGJ2} along the line of \cite[Section 2]{DCFY} and \cite[Proposition 6.3]{MR3084500}. 
 However, in this paper, we give another proof by considering the correspondence of central functionals on compact quantum groups and the ``spherical" states on its Drinfeld double. The monoidal equivalence of ${\rm Aut}^+(B)$ and $S_n^+$ gives the coincidence of the Drinfeld doubles by considering their tube algebras.

Let $\mathbb{G}$ be a compact quantum group.
Let $c_c(\widehat{\mathbb{G}})$ be a subalgebra of $\mathcal{O}(\mathbb{G})'$ given by all of the linear functionals of the form $x \mapsto h(xy) \; (y\in \mathcal{O}(\mathbb{G})).$ Then $c_c(\widehat{\mathbb{G}})$ is a non-unital associative $*$-subalgebra. The Haar state $h$ can be regarded as a minimal self-adjoint central projection with $h\omega=\omega(1)h=\omega h$ for any $\omega \in c_c(\widehat{\mathbb{G}})$. 

Let $\mathcal{O}_c(\widehat{\mathcal{D}}(\mathbb{G}))$ be a $*$-algebra whose underlying vector space is $\mathcal{O}(\mathbb{G})\otimes c_c(\widehat{\mathbb{G}})$ and the following interchange law is satisfied:
\[\omega(\cdot x_{(2)})x_{(1)}=x_{(2)}\omega(\cdot x_{(1)}),\]
where the elementary tensor $x \otimes \omega$ is denoted by $x\omega$.
 
 It is known that $\mathcal{O}_c(\widehat{\mathcal{D}}(\mathbb{G}))$ can be made into a $*$-algebraic quantum group \cite[Theorem 3.16]{DVD} and we call the locally compact quantum group which has $\mathcal{O}_c(\widehat{\mathcal{D}}(\mathbb{G}))$ as its convolution algebra the Drinfeld double of $\mathbb{G}$. 
  
 For $\omega \in \mathcal{O}(\mathbb{G})'$, we define a linear functional $\widetilde \omega \in \mathcal{O}_c(\widehat{\mathcal{D}}(\mathbb{G}))'$ by $\widetilde \omega(x\theta)=\theta(1)\omega(x)$ where $x\in \mathcal{O}(\mathbb{G})$ and $\theta \in c_c(\widehat{\mathbb{G}})$.
 We write ${\rm Ind}$ for the embedding of $\mathcal{O}(\mathbb{G})'$ into $\mathcal{O}_c(\widehat{\mathcal{D}}(\mathbb{G}))'$ , $\omega \mapsto \widetilde{\omega} $. The image of ${\rm Ind}$ can be characterized as the set of elements $\widetilde{\omega}$ satisfying $\widetilde{\omega}(a)=\widetilde{\omega}(ah)$ for all $a\in \mathcal{O}_c(\widehat{\mathcal{D}}(\mathbb{G}))$.
 
 The following \cite[Theorem 29]{DCFY} gives the condition for states on $\mathcal{O}(\mathbb{G})$ to be central in terms of functionals on $\mathcal{O}_c(\widehat{\mathcal{D}}(\mathbb{G}))$.

 \begin{theorem}
A unital linear functional $\omega$ on $\mathcal{O}(\mathbb{G})$ is a central state if and only if $\widetilde{\omega} = {\rm Ind}(\omega)$ is positive on $\mathcal{O}_c(\widehat{\mathcal{D}}(\mathbb{G}))$.
\end{theorem}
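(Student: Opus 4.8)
The plan is to reduce the equivalence to the Gelfand--Naimark--Segal (GNS) picture of the Drinfeld double, exploiting that $*$-representations of $\mathcal{D}(\mathbb{G})$ are exactly unitary representations of $\mathbb{G}$ equipped with a compatible implementation of the adjoint action of $\mathbb{G}$ (objects of the Drinfeld centre $\mathcal{Z}(\mathrm{Rep}\,\mathbb{G})$, i.e.\ Yetter--Drinfeld data), and that under GNS the non-degenerate positive functionals on $\mathcal{O}_c(\widehat{\mathcal{D}}(\mathbb{G}))$ are the vector states of cyclic such representations. The first step is to observe that $\widetilde\omega=\mathrm{Ind}(\omega)$, being in the image of $\mathrm{Ind}$, satisfies $\widetilde\omega(a)=\widetilde\omega(ah)$, and hence---using $h^*=h$ together with the fact that in either direction of the equivalence $\widetilde\omega$ is hermitian---also $\widetilde\omega(a)=\widetilde\omega(hah)$. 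Since $h$ is the minimal central projection of $c_c(\widehat{\mathbb{G}})$ onto the part on which the $\widehat{\mathbb{G}}$-action is trivial, in any GNS representation $\pi(h)$ is the projection onto the adjoint-invariant vectors; so the functionals of the form $\widetilde\omega$ are precisely those whose GNS cyclic vector is fixed by the implemented adjoint action.

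For the ``only if'' direction, start from a central state $\omega$ and form its GNS triple $(H_\omega,\pi_\omega,\Omega_\omega)$ over $\mathcal{O}(\mathbb{G})$. Ad-invariance of $\omega$ says the adjoint coaction $\mathrm{ad}\colon\mathcal{O}(\mathbb{G})\to\mathcal{O}(\mathbb{G})\otimes\mathcal{O}(\mathbb{G})$ preserves the GNS form and fixes $\Omega_\omega$, so it is implemented by a unitary representation of $\mathbb{G}$ on $H_\omega$ with invariant vector $\Omega_\omega$; this promotes $(\pi_\omega,\Omega_\omega)$ to a cyclic $*$-representation of $\mathcal{D}(\mathbb{G})$ with adjoint-invariant cyclic vector. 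A direct computation identifies the associated positive functional on $\mathcal{O}_c(\widehat{\mathcal{D}}(\mathbb{G}))$ with $\mathrm{Ind}(\omega)=\widetilde\omega$, whence $\widetilde\omega$ is positive.

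Conversely, suppose $\widetilde\omega$ is positive with GNS representation $(H,\pi,\Omega)$ of $\mathcal{D}(\mathbb{G})$. Restricting $\pi$ to the copy of $\mathcal{O}(\mathbb{G})$ inside the multiplier algebra gives a $*$-representation with $\omega(x)=\langle\Omega,\pi(x)\Omega\rangle$, so $\omega$ is positive (and unital by hypothesis). The bi-invariance $\widetilde\omega(a)=\widetilde\omega(hah)$ forces $\pi(h)\Omega=\Omega$, i.e.\ $\Omega$ is fixed by the implemented adjoint action; inserting this into $(\omega\otimes\iota)\mathrm{ad}(x)=\langle\Omega,(\pi\otimes\iota)(\mathrm{ad}(x))\,\Omega\rangle=\omega(x)1$ shows $\omega$ is ad-invariant, hence central. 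Only the vector-state description is used; uniqueness of GNS data is not needed.

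I expect the real work to be the bookkeeping in the non-unital, $*$-algebraic-quantum-group setting: spelling out how $\mathcal{O}(\mathbb{G})$ and $c_c(\widehat{\mathbb{G}})$ embed into $M(\mathcal{O}_c(\widehat{\mathcal{D}}(\mathbb{G})))$, checking that the interchange law indeed makes the $c_c(\widehat{\mathbb{G}})$-action an implementation of the adjoint action so that $\pi(h)$ is the projection onto invariants, and confirming (with the required non-degeneracy) that the image of $\mathrm{Ind}$ consists exactly of the $h$-bi-invariant functionals. Once this dictionary is in place both implications are formal. An equivalent and more hands-on route, matching the tube-algebra perspective of the introduction, is to pass to the corner $h\,\mathcal{O}_c(\widehat{\mathcal{D}}(\mathbb{G}))\,h$, identify it as a $*$-algebra with the algebra of central functions on $\mathbb{G}$ (so that its states are the central states of $\mathcal{O}(\mathbb{G})$), and use that a functional supported on a corner $pAp$ is positive on the whole algebra iff its restriction to $pAp$ is positive.
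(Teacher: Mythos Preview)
The paper does not contain a proof of this theorem: it is quoted verbatim as \cite[Theorem 29]{DCFY} and used as a black box in the appendix. There is therefore nothing in the paper to compare your argument against.

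That said, your sketch is on the right track and broadly matches the strategy in the original reference. The identification of non-degenerate $*$-representations of $\mathcal{O}_c(\widehat{\mathcal{D}}(\mathbb{G}))$ with Yetter--Drinfeld data, together with the observation that $h\in c_c(\widehat{\mathbb{G}})$ acts as the projection onto the $\mathbb{G}$-invariant subspace (for the corepresentation coming from the $c_c(\widehat{\mathbb{G}})$-action), is exactly the mechanism. One point deserves more care: in your converse direction you pass directly from $\pi(h)\Omega=\Omega$ to $(\omega\otimes\iota)\mathrm{ad}(x)=\omega(x)1$, but the link between the interchange law $\omega(\cdot\, x_{(2)})x_{(1)}=x_{(2)}\omega(\cdot\, x_{(1)})$ and the adjoint coaction $\mathrm{ad}(x)=x_{(2)}\otimes S(x_{(1)})x_{(3)}$ has to be made explicit---it is this computation that shows the $c_c(\widehat{\mathbb{G}})$-action in the GNS space really implements $\mathrm{ad}$ rather than, say, the left or right regular corepresentation. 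You flag this yourself as ``bookkeeping,'' and it is, but it is the one place where something specific to the Drinfeld double (as opposed to a generic crossed product) is used.

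Your alternative route via the corner $h\,\mathcal{O}_c(\widehat{\mathcal{D}}(\mathbb{G}))\,h$ is in fact closer to how \cite{DCFY} organises the argument, and it dovetails with the Morita-equivalence/tube-algebra viewpoint that the present paper's appendix adopts immediately after citing this theorem.
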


Tube algebras are introduced by Ocneanu \cite[Section 3]{EK95}.
The tube algebra of $\mathcal{C}={\rm Rep}(\mathbb{G})$ is a space 
 \[ {\rm Tub}(\mathcal{C})= \bigoplus_{U_i, U_j\in {\rm Irr} (\mathbb{G})} {\rm Tub}(\mathcal{C})_{ij},\;\;\;\ {\rm Tub}(\mathcal{C})_{ij}=\bigoplus_{U_s\in {\rm Irr} (\mathbb{G})} {\rm Mor}(U_s\otimes U_j, U_i\otimes U_s)\]
 with the product and the involution defined as the following:
 \[(xy)^s_{ij}=\sum_{\substack{U_k, U_r, U_t\in {\rm Irr} (\mathbb{G}),\; \\ \omega \in {\rm onb}\; {\rm Mor}(U_s, U_r\otimes U_t)}}  (\iota_i\otimes \omega^*)(x^r_{ik}\otimes \iota_t)(\iota_r \otimes y^t_{kj})(\omega\otimes \iota_j),\]
 \[(x^*)^s_{ij}=(\bar{R}^*_s\otimes \iota_i\otimes\iota_s)(\iota_s\otimes(x^{\bar{s}}_{ji})^*\otimes\iota_s)(\iota_s\otimes\iota_j\otimes R_s).\]
 where $R_s\in {\rm Mor}(\mathbf{1}, \bar{U}_s\otimes U_s), \;\bar{R_s}\in {\rm Mor}(\mathbf{1}, U_s\otimes\bar{U}_s)$ are the solutions of the conjugate equations for $(U_s, \bar{U}_s)$ $i. e.$ they satisfy the following equations:
 \[(\iota_{\bar{X}}\otimes\bar{R_s}^*)(R_s\otimes\iota_{\bar{X}})=\iota_{\bar{X}},\;(\iota_X\otimes R_s^*)(\bar{R_s}\otimes\iota_X)=\iota_X.\]
 
 We consider the larger $*$-algebra defined as following:
 \[{\rm Tub}(\mathbb{G})=\bigoplus_{i,j}{\rm Tub}(\mathbb{G})_{i, j},\;\;\;{\rm Tub}(\mathbb{G})_{i, j}={\rm Tub}({\rm Rep}(\mathbb{G}))_{ij}\otimes B(H_{\bar{\jmath}}, H_{\bar{\imath}})\]
 where $H_{\bar{\jmath}}, H_{\bar{\imath}}$ are the conjugate spaces of $H_j, H_i$.
 The algebra structure is defined by that on ${\rm Tub}({\rm Rep}(\mathbb{G}))$ and the composition of operators between the spaces $H_k$. The involution is defined similarly.

 It is known that ${\rm Tub}({\rm Rep}(\mathbb{G}))$ is a full corner of ${\rm Tub}(\mathbb{G})$ \cite[Lemma 3.4]{NY18}. Namely ${\rm Tub}({\rm Rep}(\mathbb{G}))$ and ${\rm Tub}(\mathbb{G})$ are strongly Morita equivalent (cf. \cite[Example 3.6]{RW98}).
 
 The following theorem \cite[Theorem 3.5]{NY18} gives the isomorphism of ${\rm Tub}(\mathbb{G})$ and the Drinfeld double.
 
  \begin{theorem}
 We have an isomorphism of $*$-algebras ${\rm Tub}(\mathbb{G})\cong \mathcal{O}_c(\widehat{\mathcal{D}}(\mathbb{G})).$
 \end{theorem}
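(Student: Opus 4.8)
The plan is to produce an explicit linear isomorphism between the two $*$-algebras and to verify that it intertwines products and involutions; the guiding principle is that both sides admit a description purely in terms of the monoidal category $\mathcal{C}={\rm Rep}(\mathbb{G})$ together with its canonical fiber functor $H\colon U_k\mapsto H_k$. On the tube side this is built into the definition. On the Drinfeld-double side one first rewrites the two tensor factors categorically: by Peter--Weyl, $\mathcal{O}(\mathbb{G})=\bigoplus_{U_k\in{\rm Irr}(\mathbb{G})}H_k\otimes\overline{H_k}$ in matrix coefficients, while $c_c(\widehat{\mathbb{G}})=\bigoplus_{U_l\in{\rm Irr}(\mathbb{G})}B(H_l)$ through its minimal two-sided ideals; the coproduct $\Delta$, the convolution of $c_c(\widehat{\mathbb{G}})$, the antipode and the two $*$-structures then all get expressed through the fusion morphisms ${\rm Mor}(U_s,U_r\otimes U_t)$, the associativity constraints, and $H$.

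First I would write out the interchange law $\omega(\cdot\,x_{(2)})x_{(1)}=x_{(2)}\,\omega(\cdot\,x_{(1)})$ in these terms. Combined with $\Delta(u^{(k)}_{ab})=\sum_c u^{(k)}_{ac}\otimes u^{(k)}_{cb}$, it forces any element of $\mathcal{O}_c(\widehat{\mathcal{D}}(\mathbb{G}))$ to be reconstructible from ``internal'' intertwiner data: viewing the $c_c(\widehat{\mathbb{G}})$-leg, which lives in some $B(H_s)$, as an endomorphism of $U_s$ and resolving the identity of $H_s$ inside the $U_r\otimes U_t$ with $U_s\subset U_r\otimes U_t$ that occur, one extracts exactly a sum over $U_s\in{\rm Irr}(\mathbb{G})$ of elements of spaces ${\rm Mor}(U_s\otimes U_j,U_i\otimes U_s)$, with the leftover fiber-functor directions --- $\overline{H_i}$ from the $U_i$-matrix coefficient and $H_j$ from a chosen boundary vector --- reassembling into $B(H_{\bar\jmath},H_{\bar\imath})$. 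This is precisely the summand ${\rm Tub}({\rm Rep}(\mathbb{G}))_{ij}\otimes B(H_{\bar\jmath},H_{\bar\imath})$ of ${\rm Tub}(\mathbb{G})$, and running the construction backwards supplies the inverse; bijectivity in each graded piece can alternatively be confirmed by a dimension count, or by combining the fact that ${\rm Tub}({\rm Rep}(\mathbb{G}))$ is a full corner of ${\rm Tub}(\mathbb{G})$ (\cite[Lemma 3.4]{NY18}) with the known identifications ${\rm Rep}({\rm Tub}({\rm Rep}(\mathbb{G})))\simeq Z(\mathcal{C})\simeq{\rm Rep}(\mathcal{O}_c(\widehat{\mathcal{D}}(\mathbb{G})))$, which pin down the block structure on both sides.

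Next I would check multiplicativity. Written through $\Delta$, the $c_c(\widehat{\mathbb{G}})$-product and the interchange law, the convolution product of $\mathcal{O}_c(\widehat{\mathcal{D}}(\mathbb{G}))$ becomes, after reassociating with the fusion morphisms ${\rm Mor}(U_s,U_r\otimes U_t)$ and their adjoints, term by term the tube product
\[(xy)^s_{ij}=\sum_{\substack{U_k,U_r,U_t\in{\rm Irr}(\mathbb{G}),\\ \omega\in{\rm onb}\,{\rm Mor}(U_s,U_r\otimes U_t)}}(\iota_i\otimes\omega^*)(x^r_{ik}\otimes\iota_t)(\iota_r\otimes y^t_{kj})(\omega\otimes\iota_j);\]
here the sum over the orthonormal basis $\omega$ is nothing but the completeness relation $\sum_\omega\omega\omega^*=\iota_{U_r\otimes U_t}$ on the $U_s$-isotypic component, which is exactly what the interchange law contributes when $c_c(\widehat{\mathbb{G}})$-multiplication meets $\Delta$. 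For the involution I would translate the $*$ on $\mathcal{O}_c(\widehat{\mathcal{D}}(\mathbb{G}))$ --- built from the antipode $S$ of $\mathcal{O}(\mathbb{G})$ and the $*$ of $c_c(\widehat{\mathbb{G}})$ --- through the same leg-bending, using that the standard solutions $R_s,\bar R_s$ of the conjugate equations implement $S$ and $S^{-1}$ on matrix coefficients, to recover $(x^*)^s_{ij}=(\bar{R}^*_s\otimes\iota_i\otimes\iota_s)(\iota_s\otimes(x^{\bar s}_{ji})^*\otimes\iota_s)(\iota_s\otimes\iota_j\otimes R_s)$.

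The main obstacle I anticipate is the bookkeeping in the multiplicativity and $*$-checks: keeping exact track of the associativity isomorphisms of $\mathcal{C}$ and of the scalars relating the standard $R_s,\bar R_s$ to orthonormal bases of morphism spaces, so that no spurious factors $d_s/(d_rd_t)$ survive. A clean way to settle this is to verify the intertwining property only on a generating family $\{u^{(i)}_{ab}\,\theta:\theta\in c_c(\widehat{\mathbb{G}})\}$ and then to note that both products are determined by the same fusion data of $\mathcal{C}$, so agreement on generators forces agreement everywhere; $*$-compatibility is then immediate from the explicit formulas together with the compatibility of the chosen $R_s,\bar R_s$ with the $C^*$-structure. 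With all of this in place the map is a bijective $*$-homomorphism, i.e.\ the asserted $*$-isomorphism ${\rm Tub}(\mathbb{G})\cong\mathcal{O}_c(\widehat{\mathcal{D}}(\mathbb{G}))$.
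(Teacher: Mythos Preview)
The paper does not give a proof of this theorem at all: it is stated with the attribution ``The following theorem \cite[Theorem 3.5]{NY18}'' and is used as a black box in the appendix. There is therefore nothing in the paper to compare your attempt against.

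As a standalone sketch, your plan follows the standard route taken in the cited reference: Peter--Weyl decomposes $\mathcal{O}(\mathbb{G})$ into matrix-coefficient blocks and $c_c(\widehat{\mathbb{G}})$ into the direct sum of $B(H_s)$'s, and the interchange law of $\mathcal{O}_c(\widehat{\mathcal{D}}(\mathbb{G}))$ then repackages elements as intertwiner data indexed by triples $(i,j,s)$, matching the graded pieces of ${\rm Tub}(\mathbb{G})$. Your description of how the tube product arises from convolution plus interchange, and how the involution comes from the antipode via the conjugate-equation solutions, is on the right track. The one place where your sketch is genuinely loose is the bijectivity step: appealing to ${\rm Rep}({\rm Tub}({\rm Rep}(\mathbb{G})))\simeq Z(\mathcal{C})\simeq{\rm Rep}(\mathcal{O}_c(\widehat{\mathcal{D}}(\mathbb{G})))$ to ``pin down the block structure'' is circular if those equivalences are themselves being derived from the isomorphism you are trying to establish, and a dimension count alone does not certify that your map is nonzero on each block. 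In an actual proof you would need to write down the map explicitly on the graded pieces and verify injectivity directly, which is what the reference does.
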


From the definition of the tube algebra and the monoidal equivalence between ${\rm Aut}^+(B)$ and $S_n^+$, we obtain ${\rm Tub}({\rm Rep}({\rm Aut}^+(B)))\cong {\rm Tub}({\rm Rep}(S_n^+))$. 
 Then we have that ${\rm Tub}({\rm Aut}^+(B))$ and ${\rm Tub}(S_n^+)$ are strongly Morita equivalent. It is known that there is the correspondence between the representations of strongly Morita eqivalent C*-algebras (see, for example \cite[Section 3.3]{RW98}). 
 Especially the projection of the trivial representation $p_{{\rm triv}}\in c_c(\widehat{{\rm Aut}^+(B)})$ corresponds to $p_{{\rm triv}}\in c_c(\widehat{S_n^+})$, where $p_{triv}$ is exactly the Haar state $h$ regarded as an element of $c_c(\widehat{\mathbb{G}})$. 
 Therefore by the condition of a linear functional on $\mathcal{O}(\mathbb{G})$ to be central, we obtain the one-to-one correspondence of central linear functionals on $\mathcal{O}({\rm Aut}^+(B))$ and $\mathcal{O}(S_n^+)$.
 By combining the discussion of the latter half of the previous section, we obtain $C^*(\chi_k : k\in \mathbb{N})\cong C([0,n])$. 
 
\bibliographystyle{plain}

\bibliography{biblio}

\end{document}